\documentclass[11pt,fleqn]{amsart}
\usepackage{amsmath, amsthm, amssymb, amsfonts, verbatim,color}
\usepackage[pdftex]{graphicx}
\usepackage{csquotes}
\usepackage[bookmarks]{hyperref}
\addtolength{\topmargin}{-.25in}
\setlength{\textwidth}{6in}       
\setlength{\oddsidemargin}{.25in}              
\setlength{\evensidemargin}{.25in}         
\setlength{\textheight}{8.5in}

\reversemarginpar            

\definecolor{darkgreen}{rgb}{0,0.55,0}

\newtheorem{theorem}{Theorem}[section]

\newtheorem{lemma}[theorem]{Lemma}
\newtheorem{proposition}[theorem]{Proposition}
\theoremstyle{definition}

\theoremstyle{remark}
\newtheorem{remark}[theorem]{Remark}

\numberwithin{equation}{section}
\numberwithin{theorem}{section}

\newcommand{\norm}[1]{\left\Vert#1\right\Vert}

\newcommand{\abs}[1]{\left\vert#1\right\vert}
\newcommand{\R}{{\mathbb{R}}}

\DeclareMathOperator*{\esslim}{ess\,lim}


\def\be{\begin{equation}}
\def\ee{\end{equation}}

\def\({\left(}
\def\){\right)}

\def\R{\mathbb{R}}

\def\e{\varepsilon}

\begin{document}
\title[Stability of shocks for finite-entropy solutions]{On the $L^2$ stability of shock waves for finite-entropy solutions of Burgers}

\date{\today}

\author[]{Andres A. Contreras Hip}

\author[]{Xavier Lamy}

\address[A.~Contreras Hip]{Department of Mathematical Sciences, New Mexico State University, Las Cruces,
New Mexico, USA}
\email{albertch@nmsu.edu} 

\address[X.~Lamy]{Institut de Math\'ematiques de Toulouse; UMR 5219, Universit\'e de Toulouse; CNRS, UPS IMT, F-31062 Toulouse Cedex 9, France}
\email{xlamy@math.univ-toulouse.fr}

\begin{abstract}
We prove $L^2$ stability estimates for entropic shocks among weak, possibly \emph{non-entropic}, solutions of scalar conservation laws $\partial_t u+\partial_x f(u)=0$ with strictly convex flux function $f$. This generalizes previous results by Leger and Vasseur, who proved $L^2$ stability among entropy solutions. Our main result, the estimate
\begin{align*}
\int_{\mathbb R} |u(t,\cdot)-u_0^{shock}(\cdot -x(t))|^2\,dx\leq \int_{\mathbb R}|u_0-u_0^{shock}|^2 +C\mu_+([0,t]\times\R),
\end{align*}
for some Lipschitz shift $x(t)$, includes an error term accounting for the positive part of the entropy production measure $\mu=\partial_t(u^2/2)+\partial_x q(u)$, where $q'(u)=uf'(u)$.
Stability estimates in this general non-entropic setting are of interest in connection with large deviation principles for the hydrodynamic limit of asymmetric interacting particle systems.
 Our proof adapts the scheme devised by Leger and Vasseur, where one constructs a shift $x(t)$ which allows to bound from above the time-derivative of the left-hand side. The main difference lies in the fact that our solution $u(t,\cdot)$ may present a non-entropic shock at $x=x(t)$ and new bounds are needed in that situation. We also generalize this stability estimate to initial data with bounded variation.
\end{abstract}

\maketitle

\section{Introduction}

We consider bounded weak (not necessarily entropy) solutions of Burgers' equation
\begin{align*}
\partial_t u +\partial_x \frac{u^2}{2} =0,
\end{align*}
or more generally a scalar conservation law
\begin{align}\label{eq:scl}
\partial_t u +\partial_x f(u) =0,\quad t\geq 0,\; x\in\R,
\end{align}
with uniformly convex flux $f''\geq \alpha >0$. Let us recall that for any entropy-flux pair $(\eta,q)$ i.e. $\eta''\geq 0$ and $q'=\eta'f'$, the corresponding entropy production of a bounded weak solution $u$ is the distribution
\begin{align}\label{eq:mueta}
\mu_\eta =\partial_t\eta(u) +\partial_x q(u).
\end{align}
In the special case $\eta(t)=t^2/2$ we will drop the subscript $\eta$ and simply write
\begin{align}\label{eq:mu}
\mu =\partial_t \frac{u^2}{2} + \partial_x q(u),\qquad q(v)=\int_0^v tf'(t)\, dt.
\end{align}
For smooth solutions the entropy production $\mu_\eta$ is always zero, but smooth long-time solutions do not exist in general. Entropy solutions are weak solutions whose entropy production is nonpositive, i.e. $\mu_\eta\leq 0$ for all convex entropies $\eta$. Kru\v{z}kov introduced this concept in \cite{kruzkov70} and showed that for any bounded initial condition $u_0(x)$ there exists a unique entropy solution. 

\subsection*{Finite-entropy solutions}

Here in contrast we consider weak solutions whose entropy productions do not necessarily have a sign. 
Such solutions are \emph{not} uniquely determined by their initial conditions: they will in general deviate from the unique entropy solution, and the present work addresses the question of estimating this deviation.
Our motivation comes from the study of large deviation principles for the hydrodynamic limit of asymmetric interacting particle systems \cite{kipnis-landim,varadhan04}, where it is crucial to control how much a general weak solution can deviate from the entropy solution.

 To give more details about this issue, we focus on a continuum variant introduced in  \cite{mariani10,bellettini-etal}. There, the question boils down to
describing the variational convergence ($\Gamma$-convergence) of functionals of the form 
\begin{align*}
E_\e(u)=\frac{1}{\e}\int \left[\e\partial_x u -\partial_x^{-1}(\partial_t u +\partial_x f(u))\right]^2 \, dx\, dt,
\end{align*}
in the regime $\e\to 0^+$. The same problem is considered in \cite{poliakovsky-viscosity} (with the motivation of providing a variational point of view on the vanishing viscosity method). Limits $u=\lim u_\e$ of sequences of bounded energy $E_\e(u_\e)\leq C$ are weak solutions of \eqref{eq:scl}, but not necessarily entropy solutions. They belong to the wider class that we will call here \emph{finite-entropy solutions}: bounded weak solutions of \eqref{eq:scl} such that
\begin{align}\label{eq:finiteentropy}
\mu_\eta \text{ is a Radon measure for all convex }\eta,
\end{align}
where $\mu_\eta$ is the entropy production defined in \eqref{eq:mueta}.
The conjectured limiting energy $E_0(u)$ is the negative part $\mu_-([0,T]\times\R)$ of their entropy production,  but a proof of this fact is still lacking. 

Specifically, the missing part is the upper bound: given a finite-entropy solution $u$, can one construct an approximating sequence $u_\e\to u$ in $L^1$ such that $\limsup E_\e(u_\e) \leq E_0(u) $ ? Very similar questions arise in relation with micromagnetics  models (the so-called Aviles-Giga energy), we refer to the introduction of \cite{lamy-otto} for more details.
What makes this question hard is the lack of fine knowledge on finite-entropy solutions. Unlike entropy solutions, they are not necessarily of bounded variation (BV). Only very recently E.~Marconi \cite{marconi-structure,marconi-rectif} proved that their entropy production is a one-dimensional rectifiable measure. This rectifiability result is a remarkable achievement, but it seems that solving the upper bound problem requires other new ideas. 

To the best of our knowledge, only two upper bound constructions are available in the literature, with restrictive assumptions on the finite-entropy solution $u$. The first construction in \cite{poliakovsky-viscosity} requires $u$ to be BV, and the approximating sequence is obtained by mollifying $u$ and using the fine properties of BV functions. The second construction in \cite{bellettini-etal}
%
%
 is based on approximation by vanishing viscosity, which converges in open regions where the entropy production is $\leq 0$.
 If regions of negative and positive entropy production are not \enquote{well separated} this construction breaks down, for want of a good estimate on the distance between $u$ and entropy solutions when the entropy production changes sign. 
 
 In this spirit, the only estimate \cite{lamy-otto} we are aware of is not homogeneous:
\begin{align*}
 \int_{[0,1]_t\times [-1,1]_x} \abs{u-u^{ent}}^4 \leq C\, \mu_+([0,2]_t\times [-2,2]_x)^\gamma\qquad\text{for some }\gamma\in (0,1),
\end{align*}
where $u^{ent}$ is the entropy solution with initial data $u_0$ and $|u|\leq 1$. If one applies (a rescaled version of) this estimate in small regions where $\mu_+$ is small, after summing over all regions the right-hand side may become very large because of the small exponent $\gamma$. As a consequence, this estimate cannot be used to remove the main restriction (that the regions where the entropy production has a constant sign must be well separated) in the approximation scheme of \cite{bellettini-etal}. One would rather need an estimate that is homogeneous, hence amenable to summing rescaled applications of it. 

In this work we propose a new approach towards such estimate, beginning with the distance of $u$ to entropic shocks: if a solution $u$ starts close to a shock and $\mu_+$ is small, then $u$ remains close to a shock, and this is quantified via a homogeneous estimate. 
More precisely, our main result  takes the form of an $L^2$ stability estimate for entropic shocks. For entropy solutions this question was adressed in \cite{leger,leger-vasseur} using relative entropy methods. Here we generalize their methods to solutions whose entropy production does not necessarily have a sign. Loosely stated, we prove (Theorem~\ref{t:stabestim})
\begin{align*}
\int |u(t)-\text{shock}|^2 \, dx \leq  \int |u(0)-\text{shock}|^2dx  + C \, \int_0^t\! \int \mu_+(dt,dx),
\end{align*}
where the shock at time $t$ is a shift of the initial shock. We also provide a generalization to any $BV$ initial data (Theorem~\ref{t:stabent}).

\subsection*{Strong and very strong traces}

As in \cite{leger-vasseur,krupa-vasseur,krupa-vasseur20}, in order to implement the relative entropy method we need to assume that $u$ has traces on Lipschitz curves, in a strong enough sense. From \cite{vasseur--traces}, it is known that finite-entropy solutions have traces which are reached strongly in $L^1$.  We call this the \emph{strong trace} property, precisely defined as follows. A bounded function 
$u\colon [0,T]\times\R\to\R$ satisfies the strong trace property if for any Lipschitz path $x\colon [0,T]\to\R$ there exist traces $t\mapsto u(t,x(t)\pm)$ on each side of $x(t)$, such that
\begin{align}\label{eq:ST}
\esslim_{y\to 0^+}\int_0^T \abs{u(t,x(t)\pm y)- u(t,x(t)\pm)}\, dt =0.
\end{align}
In \cite{vasseur--traces} entropy solutions are considered, but the proof there uses only a kinetic formulation which is also valid for finite-entropy solutions \cite{delellis-otto-westdickenberg}.
The results of \cite{vasseur--traces} also include traces along constant time lines, implying that (for an a.e. representative)
\begin{align}\label{eq:timecont}
[0,T]\ni t\mapsto u(t,\cdot) \in L^1_{loc}\quad\text{is continuous,}
\end{align}
whenever $u$ is a finite-entropy solution of \eqref{eq:scl}.

Unfortunately the strong trace property turns out not to be enough for our purposes, and as in \cite{leger-vasseur,krupa-vasseur,krupa-vasseur20} we will in fact require an even stronger property. We say that a bounded function $u\colon [0,T]\times\R\to \R$ satisfies the \emph{very strong trace} property if for any Lipschitz path $x\colon [0,T]\to\R$ there exist traces $t\mapsto u(t,x(t)\pm)$ such that (for an a.e. representative of $u$)
\begin{align}\label{eq:VST}
\esslim_{y\to 0^+} u(t,x(t)\pm y) = u(t,x(t)\pm) \qquad\text{for a.e. }t\in [0,T].
\end{align}
By dominated convergence the very strong trace property does imply the strong trace property.
Functions $u\in BV([0,T]\times\R)$ satisfy the very strong trace property, but it is not known whether finite-entropy solutions  satisfy it.

\subsection*{Stability of shocks in $L^2$ for finite-entropy solutions} 
We are now ready to state our main result, on the $L^2$ stability of an entropic shock wave $u^{shock}$ with initial datum 
\begin{align}\label{eq:u0shock}
u_0^{shock}(x)&= u_\ell \mathbf 1_{x< 0} +u_r \mathbf 1_{x>0},\qquad u_\ell > u_r,
\end{align}
that is, $u^{shock}(t,x)=u_0^{shock}(x-\sigma t)$, with shock speed $\sigma=(f(u_r)-f(u_\ell))/(u_r-u_\ell)$.

\begin{theorem} \label{t:stabestim}
Let $f\colon \R\to\R$ be such that $f''\geq \alpha>0$. Let $u\colon [0,T]\times\R\to\R$ be a bounded finite-entropy solution \eqref{eq:finiteentropy} of
\begin{align*}
&\partial_t u +\partial_x f(u)=0,\qquad u(0,x)=u_0(x).
\end{align*}
Assume  that $u$ satisfies the very strong trace property \eqref{eq:VST}.
Let $u^{shock}$ be the entropic shock wave with initial datum $u_0^{shock}$ \eqref{eq:u0shock}, and set $M=\sup_I f''$ and $S=\sup_I |f'|$, where $I=[\min(u_r,\inf u),\max(u_\ell,\sup u)]$.

\medskip

There exists a Lipschitz path $h\colon [0 , T] \to \R$ such that $h(0)=0$ and
\begin{align}\label{eq:stabestim}
\int_{ -R}^{R} \abs{u(t,x) - u^{shock}(t,x - h(t))}^2 dx &\leq \int_{-R-tS}^{R+tS} \abs{u_0 - u_0^{shock}}^2  dx \\
&\quad + C\frac{M^3}{\alpha^3}\, \mu_+([0,t]\times [-R-tS,R+tS]),\nonumber
\end{align}
 for all $t\in [0,T]$, all $R>0$ and some absolute constant $C>0$, where $\mu$ is the entropy production \eqref{eq:mu} associated with $\eta(t)=t^2/2$.
 
\medskip

In addition the drift $h$ is controlled by
\begin{align}\label{eq:estimdrift}
c\frac{\alpha}{M^2}(u_\ell-u_r)\int_0^t h'(\tau)^2\, d\tau & \leq \int_{-2St}^{2St}(u_0-u_0^{shock})^2\, dx \\
&\quad
+ \frac{M^3}{\alpha^3}\mu_+([0,t]\times [-2St,2St])\nonumber
\end{align}
for some absolute constant $c>0$ and all $t\in [0,T]$.
\end{theorem}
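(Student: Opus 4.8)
The plan is to follow the Leger--Vasseur relative entropy scheme, adapted to the non-entropic setting. I would introduce the relative entropy $\eta(u\mid v)=\tfrac12(u-v)^2$ and the associated relative flux $q(u\mid v)=q(u)-q(v)-v(f(u)-f(v))$, and work with the quantity
\begin{align*}
\mathcal E(t)=\int_{x<x(t)}\eta(u(t,x)\mid u_\ell)\,dx+\int_{x>x(t)}\eta(u(t,x)\mid u_r)\,dx
\end{align*}
for a suitable Lipschitz shift $t\mapsto x(t)$ (with $h(t)=x(t)-\sigma t$). The classical computation gives, for any Lipschitz path, an expression for $\tfrac{d}{dt}\mathcal E(t)$ as a boundary term at $x(t)$ (involving the one-sided traces $u_\pm(t)=u(t,x(t)\pm)$, the shift velocity $\dot x(t)$, and the Rankine--Hugoniot-type fluxes) \emph{plus} an error term coming from the fact that $\mu$ is no longer $\le 0$; concretely the bulk contribution is controlled by $\int \mu_+(dx)$ on the relevant slice, up to harmless factors from comparing $\eta(u\mid u_{\ell/r})$ with $u^2/2$. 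The finite-speed-of-propagation truncation to $[-R-tS,R+tS]$ is standard once $S=\sup|f'|$ is used, and accounts for the enlarged spatial window on the right-hand side.

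The heart of the matter, exactly as in Leger--Vasseur, is to choose the shift $x(t)$ so that the boundary term at $x(t)$ is nonpositive. The shift is defined by the ODE $\dot x(t)=V(u_-(t),u_+(t))$ for an appropriate velocity function $V$, with $x(0)=0$; one must check this ODE has a Lipschitz solution (the traces are only measurable in $t$, so this requires the Filippov-type existence argument used in \cite{leger-vasseur}, relying on the strong trace property and \eqref{eq:timecont}). Then one needs the dissipation inequality: for the chosen $V$, the boundary term
\begin{align*}
q(u_+\mid u_r)-q(u_-\mid u_\ell)-V(u_-,u_+)\bigl(\eta(u_+\mid u_r)-\eta(u_-\mid u_\ell)\bigr)\le 0.
\end{align*}
Here is where the new work lies: in \cite{leger-vasseur} one may assume $u$ is an entropy solution, so that the trace pair $(u_-,u_+)$ at $x(t)$ satisfies an entropy (Oleinik/Lax) admissibility condition, which is used crucially to sign this term. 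In our setting $(u_-,u_+)$ may form a \emph{non-entropic} discontinuity, so that inequality can fail. The fix is to allow a controlled positive contribution: one shows the boundary term is bounded by (a constant times) the local entropy production carried by the discontinuity at $x(t)$, i.e. by the density of $\mu_+$ concentrated on the curve $\{x=x(t)\}$, which is then absorbed into the $\mu_+([0,t]\times\cdots)$ term on the right. Establishing this requires a case analysis on the position of $(u_-,u_+)$ relative to the Rankine--Hugoniot locus through $(u_\ell,u_r)$, using the strict convexity bounds $\alpha\le f''\le M$ to get the explicit constant $CM^3/\alpha^3$; this is the step I expect to be the main obstacle, since it replaces a sign argument with a quantitative estimate and is presumably why the very strong trace property \eqref{eq:VST} (not just the strong trace property) is needed, to make sense pointwise in $t$ of the trace values entering the dissipation functional.

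For the drift bound \eqref{eq:estimdrift}, the idea is to extract more than nonpositivity from the boundary term: when the shift velocity $\dot x(t)=h'(t)+\sigma$ differs from $\sigma$, the dissipation functional should be bounded above by $-c\tfrac{\alpha}{M^2}(u_\ell-u_r)\,h'(t)^2$ plus the same $\mu_+$-density error, reflecting strict dissipation transverse to the shock. Integrating this refined inequality in time against $\tfrac{d}{dt}\mathcal E(t)\le 0$ and using $\mathcal E(0)\le\int_{-2St}^{2St}(u_0-u_0^{shock})^2\,dx$ (after comparing $\eta(\cdot\mid u_{\ell/r})$ with the square, and using $\mathcal E(t)\ge 0$) yields \eqref{eq:estimdrift} with the spatial window $[-2St,2St]$ coming again from finite speed of propagation applied on $[0,t]$. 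The remaining work is bookkeeping: tracking the explicit dependence on $\alpha$, $M$, $S$ through the elementary inequalities relating $\eta(u\mid v)$, $q(u\mid v)$ and $(u-v)^2$, and verifying the truncation/localization does not spoil the time-monotonicity argument.
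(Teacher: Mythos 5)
Your proposal follows the paper's proof almost line for line: the same shifted relative-entropy functional split at $x(t)$, the same construction of the shift via a Filippov-type differential inclusion (Lemma~\ref{l:shift}), the same variation formula retaining the bulk $\mu$-term (Lemma~\ref{l:var}), finite-speed truncation via $S$, and the same recognition that the genuinely new ingredient is a quantitative upper bound on the dissipation functional $D(u_-,u_+;u_\ell,u_r)$ by the positive entropy production $\max(E(u_-,u_+),0)$ carried by the jump, together with a negative term quadratic in the shift velocity yielding \eqref{eq:estimdrift}. That bound is exactly Proposition~\ref{p:Dbound}; you leave it as a ``case analysis,'' which is where all the real work in the paper lies (the case $u_+\ge u_-$ uses the decomposition $D=E+F_\pm+D(u_\pm,u_\pm;u_\ell,u_r)$, exact integral identities for each piece, and Young's inequality to produce the $M^3/\alpha^3$ constant; the case $u_->u_+$ reduces to the quadratic flux via the Serre--Vasseur representation of $-D$ as $\int|g|$). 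One small inaccuracy worth flagging: the very strong trace property is not needed ``to make sense pointwise in $t$ of the trace values''---the strong trace property already gives a.e.-defined traces $u_\pm(t)$ along any Lipschitz path---but rather to make the mollified-ODE construction of the shift $x(t)$ in Lemma~\ref{l:shift} converge, as explained in Remark~\ref{r:vst}.
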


\begin{remark}\label{r:vst}
We were not able to remove the very strong trace assumption from this statement. In the proof it is used only in Lemma~\ref{l:gencharac} to establish that $u$ admits \emph{generalized characteristics}:
 for any $x_0\in \mathbb R$, there exists a Lipschitz curve $x\colon [0,T]\mapsto \mathbb R$ such that $x(0)=x_0$ and
\begin{align*}
x'(t)=\sigma(u(t,x(t)-),u(t,x(t)+)\qquad\text{for a.e. }t\in [0,T],
\end{align*}
where $\sigma(u_-,u_+)=(f(u_+)-f(u_-))/(u_+-u_-)$ when $u_-\neq u_+$, and $\sigma(u,u)=f'(u)$.
Other places where traces are needed require only the strong trace property \eqref{eq:ST}, satisfied by finite-entropy solutions.
\end{remark}

\begin{remark}\label{r:estimdrift}
The necessity of introducing a drift $h(t)$,  and the near-optimality of estimate \eqref{eq:estimdrift} when $\mu_+=0$ and $u_\ell=-u_r=1$, are proved in \cite[Proposition~1.2]{vasseur16}.
\end{remark}

To prove Theorem~\ref{t:stabestim} we adapt the relative entropy arguments used in \cite{leger,leger-vasseur,krupa-vasseur20}  (see also \cite{serre-vasseur,krupa-vasseur,serre-vasseur--review,vasseur16}). The relative entropy method was introduced in \cite{dafermos79,diperna79} to study the $L^2$ stability of smooth solutions among entropy solutions, and later refined in \cite{leger,leger-vasseur} to obtain the $L^2$ stability (up to a drift) of shock waves (see \cite{adimurthi14} for $L^p$ stability estimates up to a drift). This method is also relevant in the study of hydrodynamic limits for fluid equations \cite{vasseur08}.  The basic idea is that for any constant $v_0$, one has an identity of the form
\begin{align*}
\frac 12 \partial_t (u-v_0)^2 = \mu -\partial_x q(u;v_0).
\end{align*} 
Stability of the constant state $v_0$ when $\mu\leq 0$ then follows by integrating over $x\in\R$, provided $q(u;v_0)$ is nice enough (e.g. has compact support). In the case of finite-entropy solutions, one also has to take into account the contribution of $\mu_+$. But when studying the stability of a shock, one integrates $\partial_t (u-u_\ell)^2$ and $\partial_t (u-u_r)^2$ on two complementary half-lines, and boundary terms appear at the junction. 

The crucial remark used in \cite{leger,leger-vasseur,krupa-vasseur20} is that, if the initial shock is shifted by a well-chosen length $x(t)$, then the boundary terms combine into a nonpositive contribution. There are two cases to consider, depending on whether or not $u(t,\cdot)$ jumps at $x(t)$. At times $t$ where it does not jump, the situation is the same for entropy or finite-entropy solutions, and the ideas of \cite{leger,leger-vasseur,krupa-vasseur20} apply also in our case. But at times $t$ where it does jump, an entropy solution can only make a negative jump, while a finite-entropy solution can also make a positive jump. More precisely, denoting by $(u_-,u_+)$ the values of the jump of $u$, it is shown in \cite{leger,leger-vasseur} that the dissipation rate $D(u_-,u_+;u_\ell,u_r)$ coming from the boundary terms satisfies
\begin{align*}
D(u_-,u_+;u_\ell,u_r)\leq 0\qquad\text{whenever }u_-\geq u_+ \text{ and }u_\ell\geq u_r.
\end{align*}
To include finite-entropy solutions, we have to consider also what happens when $u_- < u_+$. One cannot expect the dissipation rate $D$ to remain $\leq 0$, but what we do show (see Proposition~\ref{p:Dbound}) is that its positive part is controlled by the entropy cost of the jump, in other words by $\mu_+$. This crucial observation enables us to adapt the techniques of \cite{leger,leger-vasseur,krupa-vasseur20} to our situation and to prove the stability estimate \eqref{eq:stabestim}. 
In fact we prove a sharper upper bound on $D$, thanks to which the  control \eqref{eq:estimdrift} on the drift $h(t)$ can then be obtained  as in \cite{krupa-vasseur20}.

\subsection*{Stability of entropy solutions with $BV$ initial data}

As a complement we provide a generalization of Theorem~\ref{t:stabestim} where the entropy solution $u^{shock}$ is replaced by any entropy solution with $BV$ initial data.
This relies, as in  \cite{krupa-vasseur,chen-krupa-vasseur}, on applying the techniques of Theorem~\ref{t:stabestim}'s proof to obtain estimates between $u$ and functions with a finite number of shocks. Each shock wave has to be shifted, and all shifts may be different.
The estimate \eqref{eq:estimdrift} on the drift of one single shock in Theorem~\ref{t:stabestim} is therefore replaced by an estimate on the $L^1$ distance between the \enquote{shifted} function and the actual entropy solution.

\begin{theorem}\label{t:stabent}
Let $f\colon \R\to\R$ be such that $f''\geq \alpha>0$. Let $u\colon [0,T]\times\R\to\R$ be a bounded finite-entropy solution \eqref{eq:finiteentropy} of
\begin{align*}
&\partial_t u +\partial_x f(u)=0,\qquad u(0,x)=u_0(x).
\end{align*}
Assume  that $u$ satisfies the very strong trace property \eqref{eq:VST}.
Let $\zeta$ be an entropy solution of \eqref{eq:scl} with initial datum $\zeta_0\in L^\infty\cap BV_{loc}(\mathbb R)$, and set $M=\sup_I f''$ and $S=\sup_I |f'|$, where $I=[\min(\inf\zeta_0,\inf u),\max(\sup\zeta_0,\sup u)]$.

\medskip

There exists $\tilde u\in L^\infty\cap BV_{loc}( [0,T]\times\R)\cap \mathrm{Lip}([0,T],L^1_{loc}(\R))$ such that
\begin{align}\label{eq:stabentL2}
\int_{-R}^{R}\abs{u(t,x)-\tilde u(t,x)}^2 dx &\leq \int_{-R-St}^{R+St}\abs{u_0-\zeta_0}^2 dx
\\
&\quad +C\frac{M^3}{\alpha^3}\mu_+([0,t]\times [-R-St,R+St]),\nonumber
\end{align}
and
\begin{align}\label{eq:stabentL1}
&\int_{-R}^R\abs{\tilde u(t,x)-\zeta(t,x)}\, dx \\
&\leq C
\frac{M}{\alpha^{\frac 12}}\sqrt{(D\zeta_0)_-([-R-St,R+St])}\sqrt{t}\nonumber\\
&\hspace{3em}\cdot
\sqrt{
\int_{-R-St}^{R+St}\abs{u_0-\zeta_0}^2 dx +C\frac{M^3}{\alpha^3}\mu_+([0,t]\times [-R-St,R+St])},\nonumber
\end{align}
for some absolute constant $C>0$, all $t\in [0,T]$ and all $R>0$. 
\end{theorem}

\begin{remark}
In the case $\zeta_0=u_0^{shock}$, Theorem~\ref{t:stabent} is a corollary of Theorem~\ref{t:stabestim} taking $\tilde u(t,x)=u_0^{shock}(t,x-h(t))$ so that \eqref{eq:stabentL2} is exactly \eqref{eq:stabestim}, and \eqref{eq:stabentL1} follows from \eqref{eq:estimdrift}.
\end{remark}

\begin{remark}
It is well-known that weak differentiability of order $s=1/3$ is critical for finite entropy solutions of \eqref{eq:scl} (see \cite{delellis-westdickenberg,golseperthame13,delellisignat15}). Somewhat interestingly this critical exponent also comes up in relation with Theorem~\ref{t:stabent}:
if one wishes to use Theorem~\ref{t:stabent} in order to estimate the distance of $u$ to the entropy solution starting at $u_0$ (when $u_0$ is not $BV$) in terms of $\mu_+$, it seems natural to consider $\zeta_0= u_0 * \rho_\e$ with $\rho_\e(x)=\e^{-1}\rho(x/\e)$ for some smooth kernel $\rho$ and $\e$ small enough so that $\int |u_0 -u_0 *\rho_\e|^2\, dx\lesssim \int \mu_+(dt,dx)$. The right-hand side of \eqref{eq:stabentL1} then puts forward the square root of the product
\begin{align*}
\int |(u_0 * \rho_\e)'|\, dx \cdot \int |u_0 -u_0 *\rho_\e|^2\, dx.
\end{align*}
If  $u_0$ enjoys some fractional derivability of order $s>0$ (e.g. of Besov $B^{s}_{2,\infty}$ or Sobolev $W^{s,2}$ type), the first factor is typically bounded by $\e^{s-1}$, the second by $\e^{2s}$, hence this product is bounded by $\e^{3s-1}$, and the exponent $s=1/3$ is critical.
\end{remark}

\subsection*{Outline}

The article is organized as follows. In section~\ref{s:D} we prove the new bound on the dissipation rate $D$ appearing in the relative entropy method. In section~\ref{s:proof} we recall and adapt the arguments of \cite{leger,leger-vasseur} to prove Theorem~\ref{t:stabestim}. In section~\ref{s:stabent} we prove Theorem~\ref{t:stabent}.

\subsection*{Acknowledgements} X.L. is partially supported by ANR project ANR-18-CE40-0023 and COOPINTER project IEA-297303.

\section{Upper bound on the dissipation rate $D$}\label{s:D}

We start by setting some notations. We denote by $\eta$, $q$ the entropy-flux pair given by
\begin{align*}
\eta (x) = \frac{x^2}{2} ,\quad q(x) = \int_0^x \eta'f',
\end{align*}
and by $\eta (\cdot \vert \cdot) , q(\cdot ; \cdot)$ the corresponding relative entropy-flux pair
\begin{align*}
\eta(x \vert a) &= \eta(x) - \eta(a) - \eta '(a) (x - a) = \frac{{(x - a)}^2}{2}\\
q(x ; a) &= q(x) - q(a) - \eta ' (a) (f(x) - f(a)).
\end{align*}
The propagation speed of a shock $(u_-,u_+)$ is constrained by the Rankine-Hugoniot condition:
\begin{align}\label{eq:sigma}
\sigma(u_-,u_+)=\frac{f(u_+)-f(u_-)}{u_+-u_-},
\end{align}
and by setting $\sigma(u,u)=f'(u)$ the function $\sigma$ is continuous on $\R^2$.
Given two shocks $(u_-,u_+)$ and $(u_\ell,u_r)$ we define the dissipation rate
\begin{align}\label{eq:D}
D(u_-,u_+;u_\ell, u_r) := q(u_+ ; u_r) - q(u_- ; u_\ell) - \sigma(u_-,u_+) \left( \eta(u_+|u_r)-\eta(u_-|u_\ell) \right).
\end{align}
As explained in the introduction, this corresponds to the boundary terms which arise when calculating 
\begin{align*}
\partial_t \int \eta\big(u(t,x)\big| u_0^{shock}(x-x(t))\big)\, dx,
\end{align*}
at times $t$ where $u(t,\cdot)$ has a jump $(u_-,u_+)$ at $x=x(t)$.

Our goal in this section is to compare the dissipation rate $D$ with the entropy cost of the jump $(u_-,u_+)$, given by
\begin{align}\label{eq:E}
E(u_-,u_+)&=q(u_+)-q(u_-)-\sigma(u_-,u_+)(\eta(u_+)-\eta(u_-)).
\end{align}
This formula corresponds to the fact that, if a solution $u$ has a jump $(u_-(t),u_+(t))$ along a curve $x(t)$, and is smooth everywhere else, then by the BV chain rule (see e.g. \cite[\S~3.10]{AFP}) the entropy production $\mu$ is given by
\begin{align*}
\mu(A) =\int \mathbf 1_{(t,x(t))\in A}\, E(u_-(t),u_+(t))\, dt.
\end{align*}
The main result of this section is the following.
\begin{proposition} \label{p:Dbound}
For $u_\ell\geq u_r$ and any $u_\pm\in\R$ we have
\begin{align*}
 D(u_-,u_+ ; u_\ell, u_r)
&\leq C_1\frac{M^3}{\alpha^3} \max( E(u_-,u_+), 0)\\ 
&\quad - C_2\,\alpha\,(u_\ell-u_r)\left[(u_\ell-u_-)^2+(u_r-u_+)^2 \right],
\end{align*}
for some absolute constants $C_1,C_2>0$ and $0<\alpha\leq M$ such that $\alpha \leq f''\leq M$ on the convex hull of $\lbrace u_-,u_+,u_\ell,u_r\rbrace$.
\end{proposition}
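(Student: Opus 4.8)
The plan is to treat $D$ as a function of the jump values $(u_-,u_+)$ with the shock $(u_\ell,u_r)$ fixed, and to split the analysis according to whether the jump is entropic ($u_-\geq u_+$) or anti-entropic ($u_-<u_+$), and according to the position of $(u_-,u_+)$ relative to $(u_\ell,u_r)$. The first step is to record the basic normalizations: by the definition of the relative flux $q(\cdot;a)$ one has $q(x;a)=\int_a^x (f'(y)-f'(a))\,y\,dy$ up to a harmless rewriting, and more usefully $\partial_x q(x;a) = \eta'(x\,|\,a)\,f'(x) = (x-a)f'(x)$, while $\partial_x \eta(x\,|\,a)=x-a$. These identities let one write $D$ as a double integral / difference that is quadratic-like in the gaps $u_\ell-u_-$ and $u_r-u_+$, with coefficients controlled by $f''$ between $\alpha$ and $M$. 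Simultaneously one writes $E(u_-,u_+)$ in the Lax form: $E(u_-,u_+) = \int_{u_+}^{u_-}(f'(y) - \sigma(u_-,u_+))\,dy$ (with the usual sign), which is $\geq 0$ precisely when $u_-\geq u_+$ and is comparable to $\alpha\,|u_--u_+|^3$ by convexity; when $u_-<u_+$ the quantity $E$ is $\leq 0$ and $\max(E,0)=0$, so in that regime the claimed bound says $D\leq -C_2\alpha(u_\ell-u_-)[(u_\ell-u_-)^2+(u_r-u_+)^2]$ with no positive slack, which is the delicate case.

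The second step is the entropic case $u_-\geq u_+$, which is essentially the Leger--Vasseur computation: here one already knows $D\leq 0$ when additionally the jump lies ``inside'' the shock, and more generally one expands $D$ in Taylor form around the reference shock. I would reduce to showing the strengthened inequality $D \leq -C_2\alpha(u_\ell-u_-)[(u_\ell-u_-)^2+(u_r-u_+)^2]$ directly, by writing $D$ as an integral of $f''$ against an explicit kernel in the variables $s=u_\ell-u_-$, $r=u_r-u_+$, extracting the leading cubic term, and bounding the remainder using $\alpha\leq f''\leq M$. The key algebraic fact to verify is that the leading term is a negative-definite cubic form in $(s,r)$ times $\alpha$, with the factor $(u_\ell-u_-)$ appearing because $D$ and its first derivatives vanish when $u_-=u_\ell$ and $u_+=u_r$ (i.e. $D$ is ``second order flat'' at the reference shock along appropriate directions); the monotone/convex structure then gives the sign.

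The third and hardest step is the anti-entropic case $u_-<u_+$. Here the idea is to compare the given anti-entropic jump $(u_-,u_+)$ with the entropic jump $(u_+,u_-)$ obtained by swapping — note $\sigma(u_-,u_+)=\sigma(u_+,u_-)$ and $E(u_-,u_+)=-E(u_+,u_-)$ — and to write $D(u_-,u_+;u_\ell,u_r) - D(u_+,u_-;u_\ell,u_r)$ as an explicit expression. Using $q(b;a)-q(a;b) = q(b)-q(a) - \eta'(a)(f(b)-f(a)) - q(a)+q(b)+\eta'(b)(f(b)-f(a))$-type manipulations, this difference reduces to something proportional to $E(u_+,u_-)$ plus lower-order terms, so that $D(u_-,u_+;\cdot) = D(u_+,u_-;\cdot) + (\text{multiple of } E(u_+,u_-) = \max(E(u_-,u_+),0)\cdot(-1))\cdots$ — here I need to track signs carefully. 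Applying the already-established entropic bound to $D(u_+,u_-;u_\ell,u_r)$ (now $u_+\geq u_-$, legitimate) gives the negative quadratic term, but with $u_\ell-u_+$ and $u_r-u_-$ in place of $u_\ell-u_-$ and $u_r-u_+$; so the remaining work is to absorb the discrepancy between $(u_\ell-u_-,u_r-u_+)$ and $(u_\ell-u_+,u_r-u_-)$ — which differ by $\pm(u_+-u_-)$ — into the positive term $C_1(M^3/\alpha^3)\max(E,0)$, using $\max(E(u_-,u_+),0)=|E(u_+,u_-)|\gtrsim \alpha|u_+-u_-|^3$. This is a Young-type splitting, and the appearance of the power $M^3/\alpha^3$ strongly suggests exactly such a cubic-versus-cubic absorption with the ``bad'' terms carrying $M$-factors and the ``good'' cubic term carrying a single $\alpha$; getting the powers to match is the main bookkeeping obstacle, and is presumably why the constant in the statement is $M^3/\alpha^3$ rather than something smaller.

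I expect the main difficulty to be the third step: controlling the cross terms that arise when passing from the anti-entropic jump to its entropic reflection, in particular making sure that every term not of the form ``$-\alpha(u_\ell-u_-)\times(\text{squares})$'' can be dominated by $(M^3/\alpha^3)\,\alpha|u_+-u_-|^3$ after a suitable weighted Young inequality, uniformly over all configurations of $u_-,u_+$ relative to $[u_r,u_\ell]$. A secondary subtlety is ensuring the factor $(u_\ell-u_-)$ (as opposed to $|u_\ell-u_-|$ or $(u_\ell-u_+)$) survives with the right sign in all sub-cases; this is why one keeps the shock ordering $u_\ell\geq u_r$ fixed throughout and only varies the jump.
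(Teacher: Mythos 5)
Your plan for the hard case $u_-<u_+$ (reflection to the entropic jump) is a genuinely different route from the paper's, which instead splits $D=E+F_\pm+D(u_\pm,u_\pm;u_\ell,u_r)$, bounds each piece via explicit integral identities for $E$, $F_\pm$ and $D(u,u;u_\ell,u_r)$ in terms of $f''$, and closes with a Young inequality. Your route is viable: since $\eta(t)=t^2/2$ one checks by elementary algebra (using Rankine--Hugoniot and $\sigma(u_-,u_+)=\sigma(u_+,u_-)$) that
\begin{equation*}
D(u_-,u_+;u_\ell,u_r)-D(u_+,u_-;u_\ell,u_r)=2E(u_-,u_+),
\end{equation*}
which is exact, not merely ``proportional up to lower-order terms'' as you cautiously put it. Applying the entropic case to $D(u_+,u_-;\cdot)$ and writing $u_\ell-u_+=(u_\ell-u_-)-\Delta$, $u_r-u_-=(u_r-u_+)+\Delta$ with $\Delta=u_+-u_->0$, the mismatch reduces to the single term $\tfrac{\alpha}{6}\Delta(u_\ell-u_r)^2$, which you absorb by Young using $\Delta^3\lesssim E/\alpha$ and $(u_\ell-u_r)^3\leq 2(u_\ell-u_r)[(u_\ell-u_-)^2+(u_r-u_+)^2]$.

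Two corrections. First, the factor multiplying the negative bracket must be $(u_\ell-u_r)$, not $(u_\ell-u_-)$: the paper's own proof derives $(u_\ell-u_r)$ in both cases, this is what makes the term genuinely nonpositive (there is no sign control on $u_\ell-u_-$), and it is what the downstream drift estimate uses. It is also essential for your reflection argument to close, since $(u_\ell-u_r)$ is invariant under $(u_-,u_+)\mapsto(u_+,u_-)$ while $(u_\ell-u_-)$ is not. Second, and more interestingly, your expectation that the $M^3/\alpha^3$ factor ``strongly suggests a cubic-versus-cubic absorption'' is not borne out by your own scheme: because the reflection identity is exact and the entropic-case bound carries only the lower bound $\alpha$ on $f''$, every term in your error analysis is linear in $\alpha$ and the Young absorption produces an absolute constant, with no $M/\alpha$ ratio. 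Carried through, your approach would yield $C_1$ absolute, sharpening the paper's $C_1 M^3/\alpha^3$; the paper's split through $F_\pm$ is where the upper curvature $M$ enters (via $\gamma_\pm\leq M$ and the choice $\lambda^3=\alpha^2/(32\gamma_\pm^2)$), and the reflection sidesteps $F_\pm$ entirely.

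For the entropic case $u_-\geq u_+$ your description is vaguer than the paper's and contains a small misconception: the leading behaviour of $D$ near $(u_-,u_+)=(u_\ell,u_r)$ is a \emph{quadratic} form in $(u_\ell-u_-,u_r-u_+)$ with coefficient proportional to the shock strength $(u_\ell-u_r)$, not a cubic form (the paper's exact Burgers formula $-D_0=\tfrac14(u_\ell-u_r)(H^2+K^2)-\tfrac1{12}(H-K)^3$ makes this transparent). The paper's reduction via $g(t)=f(t)-\sigma t-\mathrm{const}$ and the pointwise comparison $|g|\geq\alpha|g_0|$, which brings everything to the explicit Burgers case, is cleaner and more robust than the Taylor-plus-remainder plan you sketch; you would likely want to adopt it, and indeed your own Step~3 already needs precisely that strengthened entropic bound with coefficient $(u_\ell-u_r)$.
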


\begin{proof}[Proof of Proposition~\ref{p:Dbound}]
The case $u_-\geq u_+$ can be inferred from  the arguments in \cite[Lemma~4.1]{krupa-vasseur20}, only the case $u_-<u_+$ is really new. For the reader's convenience we include a proof in both cases.  Only the values of $f$ on the convex hull of $\lbrace u_-,u_+,u_\ell,u_r\rbrace$ play a role in this inequality, so we assume without loss of generality that $\alpha\leq f''\leq M$ on $\R$.

\medskip

\noindent\textbf{Case 1: $u_+\geq u_-$.}

\noindent
We split $D$ as
\begin{align}\label{eq:Dsplit}
D(u_-,u_+;u_\ell,u_r)&=E(u_-,u_+) + F_-(u_-,u_+;u_\ell,u_r) + D(u_-,u_-;u_\ell,u_r)\\
&=E(u_-,u_+) + F_+(u_-,u_+;u_\ell,u_r) + D(u_+,u_+;u_\ell,u_r)\nonumber
\end{align}
where
\begin{align*}
F_-(u_-,u_+;u_\ell,u_r)&=D(u_-,u_+;u_\ell,u_r)-E(u_-,u_+)-D(u_-,u_-;u_\ell,u_r)\\
&=\eta'(u_r)\left[ f(u_-)-f(u_+) +\sigma(u_-,u_+)u_+ -f'(u_-)u_- \right] \\
&\quad
+(\sigma(u_-,u_+)-f'(u_-))\big[
\eta(u_r)-\eta(u_\ell)-u_r\eta'(u_r) \\
&\hspace{14em} + u_\ell\eta'(u_\ell) - \eta'(u_\ell)u_-
\big], \\
F_+(u_-,u_+;u_\ell,u_r)&=D(u_-,u_+;u_\ell,u_r)-E(u_-,u_+)-D(u_+,u_+;u_\ell,u_r)\\
&=\eta'(u_\ell)\left[ f(u_-)-f(u_+) -\sigma(u_-,u_+)u_- +f'(u_+)u_+ \right] \\
&\quad
+(\sigma(u_-,u_+)-f'(u_+))\big[
\eta(u_r)-\eta(u_\ell)-u_r\eta'(u_r) \\
&\hspace{14em} + u_\ell\eta'(u_\ell) + \eta'(u_r)u_+,
\big].
\end{align*}
We also define
\begin{align*}
\Delta:=u_+-u_-\geq 0,\quad A_\pm:=u_\ell-u_\pm \geq  B_\pm:=u_r - u_\pm,
\end{align*}
and start by remarking that
\begin{align}
E(u_-,u_+)&= \frac \beta {12} \Delta^3\qquad\text{for some }\beta\in [\alpha,M], \label{eq:idE}\\
F_\pm(u_-,u_+;u_\ell,u_r)&=\mp \frac {\gamma_\pm} 4 \Delta (A_\pm^2-B_\pm^2) 
\qquad\text{for some }\gamma_\pm\in [\alpha,M],\label{eq:idFpm}\\
D(u_\pm,u_\pm;u_\ell,u_r)&\leq -\frac{\alpha}{6} (A_\pm^3-B_\pm^3). \label{eq:boundDu-} 
\end{align}
\medskip
\noindent\textit{Proof of \eqref{eq:idE}.} Recalling that $\eta(t)=t^2/2$ and $q'(t)=t f'(t)$ we have
\begin{align*}
E(u_-,u_+)&=q(u_+)-q(u_-)-\sigma(u_-,u_+)(\eta(u_+)-\eta(u_-))\\
&=\int_{u_-}^{u_+} tf'(t)\, dt  -\frac{u_+ +u_-}{2}\int_{u_-}^{u_+}f'(t)\, dt \\
&=\frac 14 (u_+-u_-)^2 \int_{-1}^1 s f'\left(\frac{u_++u_-}{2} +s \frac{u_+-u_-}{2}\right)\, ds \\
&=\frac 14 (u_+-u_-)^2 \int_{-1}^1 s \left[f'\left(\frac{u_++u_-}{2} +s \frac{u_+-u_-}{2}\right)-f'\left(\frac{u_+ +u_-}{2} \right)\right]\, ds \\
&=\frac 18 (u_+-u_-)^3\int_0^1 \int_{-1}^1 s^2 f''\left(\frac{u_++u_-}{2} +st \frac{u_+-u_-}{2}\right)\, ds\, dt.
\end{align*}
This last expression implies \eqref{eq:idE} since $\alpha\leq f''\leq M$  and $\int_0^1\int_{-1}^1 s^2 \, ds dt = 2/3$.\qed

\medskip
\noindent\textit{Proof of \eqref{eq:idFpm}.}
We have 
\begin{align*}
 F_-
&=\frac 12 (u_\ell -u_r)(u_\ell+u_r-2u_-)\frac{1}{u_+-u_-}\int_{u_-}^{u_+}(f'(t)-f'(u_-))\, dt
\\
&=\frac 12 (A_--B_-)(A_-+B_-)\int_{0}^{1}\left[f'(u_- + s(u_+-u_-))-f'(u_-)\right]\, ds
\\
&= \frac 12 \Delta (A_-^2-B_-^2)\int_0^1\int_0^1 t\,f''(u_- + st(u_+-u_-))\, ds\, dt,
\end{align*}
which gives \eqref{eq:idFpm} for $F_-$ since $\alpha\leq f''\leq M$  and $\int_0^1\int_0^1 t\, ds dt=1/2$. Similarly
\begin{align*}
F_+
&= -\frac 12 \Delta (A_+^2-B_+^2)\int_0^1\int_0^1 t\,f''(u_+ -st (u_+-u_-))\, ds\, dt
\end{align*}
which gives \eqref{eq:idFpm} for $F_+$.\qed

\medskip
\noindent\textit{Proof of \eqref{eq:boundDu-}.}
We have 
\begin{align*}
D(u,u;u_\ell,u_r)
&= \int_{u_r}^{u_\ell}tf'(t)\, dt - u_\ell \int_{u}^{u_\ell}f'(t)\, dt
-u_r\int_{u_r}^{u}f'(t)\, dt \\
&\quad 
+\frac 12 (u_\ell-u_r)(u_\ell+u_r-2u)f'(u)
\\
&= \int_{u}^{u_\ell} (t - u_\ell) f'(t)\, dt
+\int_{u_r}^{u} (t - u_r) f'(t)\, dt \\
&\quad
+ \left( \int_u^{u_\ell} (t-u_\ell)+\int_{u_r}^{u} (t-u_r) \right)f'(u)
\\
&=\int_{u}^{u_\ell}(t-u_\ell)(f'(t)-f'(u))\, dt 
+ \int_{u_r}^{u}(t-u_r)(f'(t)-f'(u))\, dt 
\\
&=\int_{u}^{u_\ell}(t-u_\ell)(t-u)\int_0^1 f''(u+s(t-u))\, ds\, dt\\
& \quad +\int_{u_r}^{u}(t-u_r)(t-u)\int_0^1 f''(u+s(t-u))\, ds\, dt.
\end{align*}
If $u\in [u_r,u_\ell]$ we see that $f''\geq \alpha $ implies
\begin{align*}
D(u,u;u_\ell,u_r)&\leq -\alpha\left(\int_{u}^{u_\ell}(u_\ell-t)(t-u)\, dt + \int_{u_r}^{u}(t-u_r)(u-t)\, dt\right) \\
&= -\frac \alpha 6 \left(A^3-B^3\right),
\end{align*}
where $A=u_\ell-u$ and $B=u_r-u$.
If $u\leq u_r$ we rewrite the above as
\begin{align*}
D(u,u;u_\ell,u_r)&=-\int_{u_r}^{u_\ell}(u_\ell-t)(t-u)\int_0^1 f''(u+s(t-u))\, ds\, dt\\
& \quad -\int_{u}^{u_r}(u_\ell-u_r)(t-u)\int_0^1 f''(u+s(t-u))\, ds\, dt,
\end{align*}
and if $u\geq u_\ell$ as
\begin{align*}
D(u,u;u_\ell,u_r)&=-\int_{u_\ell}^{u}(u_\ell-u_r)(u-t)\int_0^1 f''(u+s(t-u))\, ds\, dt\\
& \quad -\int_{u_r}^{u_\ell}(t-u_r)(u-t)\int_0^1 f''(u+s(t-u))\, ds\, dt,
\end{align*}
and in both cases we deduce again that \eqref{eq:boundDu-} is valid.\qed

\medskip
Combining \eqref{eq:Dsplit} with \eqref{eq:idE}-\eqref{eq:boundDu-} we obtain
\begin{align}\label{eq:Dbound1}
D(u_-,u_+;u_\ell,u_r)\leq \frac{\beta}{12}\Delta^3 \mp \frac{\gamma_\pm}{4}\Delta(A_\pm^2-B_\pm^2) -\frac{\alpha}{6}(A_\pm^3-B_\pm^3).
\end{align}
Since $\Delta\geq 0$, for any $A\geq B$ and $\gamma,\lambda>0$, by Young's inequality $ab\leq \frac 13 a^3+\frac 23 b^{\frac 32}$ ($a,b\geq 0)$ we have
\begin{align*}
\frac\gamma 4 \Delta \abs{A^2-B^2}\leq\frac{\gamma}{12\lambda^3}\Delta^3 + \frac \gamma 6 \lambda^{\frac 32} \abs{A^2-B^2}^{\frac 32}.
\end{align*}
From
\begin{align*}
\abs{A^2-B^2}^3&=(A-B)^2\abs{A^2-B^2}(A+B)^2 \leq 2 (A-B)^2 (A^2+B^2)^2 \\
&
\leq 8(A-B)^2 (A^2+AB+B^2)^2=8(A^3-B^3)^2,
\end{align*}
we deduce
\begin{align*}
\frac\gamma 4 \Delta \abs{A^2-B^2}\leq\frac{\gamma}{12\lambda^3}\Delta^3 +  {\sqrt 2} \frac\gamma 3 \lambda^{\frac 32} (A^3-B^3),
\end{align*}
and we see that choosing $\lambda^3=\alpha^2/(32 \gamma^2)$ leads to
\begin{align*}
\frac\gamma 4 \Delta \abs{A^2-B^2} \leq \frac{8}{3}\frac{\gamma^3}{\alpha^2} \Delta^3+\frac{\alpha}{12}(A^3-B^3).
\end{align*}
Plugging this into \eqref{eq:Dbound1} yields
\begin{align*}
D(u_-,u_+;u_\ell,u_r) &\leq \left(1+32\frac{\gamma_\pm^3}{\beta\alpha^2} \right)\frac{\beta}{12}\Delta^3 -\frac{\alpha}{12}(A_\pm^3-B_\pm^3).
\end{align*}
Recalling \eqref{eq:idE}-\eqref{eq:idFpm} this implies
\begin{align*}
D(u_-,u_+;u_\ell,u_r) &\leq C \frac{M^3}{\alpha^3} E(u_-,u_+) -\frac{\alpha}{12}(A_\pm^3-B_\pm^3),
\end{align*}
for $C=33$.
Remarking that 
\begin{align*}
A^3-B^3 &=(A-B)(A^2+AB+B^2)\geq (A-B)\frac{A^2+B^2}{2},\\
A_\pm -B_\pm&=u_\ell -u_r,
\end{align*}
we deduce
\begin{align*}
D(u_-,u_+;u_\ell,u_r) &\leq C \frac{M^3}{\alpha^3} E(u_-,u_+)\\
&\quad
-\frac{\alpha}{24}(u_\ell-u_r)\left[(u_\ell -u_-)^2+(u_r-u_+)^2\right],\nonumber
\end{align*}
which proves Proposition~\ref{p:Dbound} when $u_+\geq u_-$.

\medskip
\noindent\textbf{Case 2: $u_- > u_+$.}

\noindent
Following \cite{serre-vasseur} (see also \cite[Lemma~3.3]{krupa-vasseur20}) we write $D$ as a combination of integrals of the function
\begin{align*}
g(t)&=f(t)-\sigma(u_-,u_+) t - (f(u_+)-\sigma(u_-,u_+)u_+)\\
&=f(t)-\sigma(u_-,u_+) t - (f(u_-)-\sigma(u_-,u_+)u_-),
\end{align*}
where the second equality follows from the definition of $\sigma$.
Specifically we have
\begin{align*}
D(u_-,u_+;u_\ell,u_r)&=-\int_{u_r}^{u_+}g(t)\, dt + \int_{u_\ell}^{u_-} g(t)\, dt.
\end{align*}
As remarked in \cite{serre-vasseur}, since $u_+\leq u_-$ and $u_r\leq u_\ell$, this can be written as
\begin{align}\label{eq:idDg}
-D=\int_I \abs{g} +\int_J \abs{g},
\end{align}
where $I$ and $J$ are disjoint intervals such that 
\begin{align*}
I\cup J=([u_+,u_-]\cup [u_r,u_\ell])\setminus ([u_+,u_-]\cap [u_r,u_\ell]).
\end{align*}
We denote by $g_0$ the function $g$ in the special case $f(t)=f_0(t)=t^2/2$, i.e.
\begin{align*}
g_0(t)=\frac 12 (t-u_+)(t-u_-).
\end{align*}
Since $g-\alpha g_0$ is a convex function vanishing at $u_+$ and $u_-$, we have 
\begin{align*}
g(t)&\geq \alpha g_0(t) \quad\text{for }t\in \R\setminus [u_+,u_-],\\
g(t)&\leq \alpha g_0(t) \quad\text{for }t\in  [u_+,u_-].
\end{align*}
Therefore $\abs{g}\geq \alpha \abs{g_0}$ on $\R$ and one infers from \eqref{eq:idDg} that
\begin{align*}
-D\geq -\alpha D_0,
\end{align*}
where $D_0$ is the dissipation rate \eqref{eq:D} for $f(t)=f_0(t)=t^2/2$.  To conclude the proof of
Proposition~\ref{p:Dbound} it suffices to check that
\begin{align}\label{eq:boundD0}
-D_0(u_-,u_+;u_\ell,u_r) \geq c (u_\ell-u_r) \left[(u_\ell-u_-)^2 +(u_r-u_+)^2 \right],
\end{align}
for some absolute constant $c>0$. 
To this end we compute
\begin{align*}
\int_{u_+}^{u_r} g_0 &= \frac 16 (u_r-u_+)^3 +\frac{1}{4} (u_+-u_-)(u_r-u_+)^2,\\
\int_{u_-}^{u_\ell} g_0 & =\frac 16(u_\ell - u_-)^3+\frac 14 (u_--u_+)(u_\ell-u_-)^2,
\end{align*}
so that, setting $H=u_\ell-u_-$ and $K=u_r-u_+$, we find
\begin{align*}
-D_0 &= \frac 14 (u_--u_+)(H^2 +K^2)+\frac 16 (H^3-K^3).
\end{align*}
This implies
\begin{align*}
-D_0&=\frac 14 (u_\ell - u_r -H +K)(H^2+K^2) +\frac 16 (H^3-K^3) \\
&=\frac 14 (u_\ell - u_r)(H^2+K^2) -\frac 1{12}(H-K)^3
\end{align*}
As $H-K=u_\ell-u_r -(u_--u_+)\leq u_\ell-u_r$ we deduce
\begin{align*}
-D_0&\geq \frac 14 (u_\ell-u_r)(H^2+K^2)-\frac 1{12}(u_\ell-u_r)(H-K)^2 \\
& =\frac 14 (u_\ell - u_r)\left(H^2+K^2-\frac 13(H-K)^2\right)\\
&\geq \frac 1{12}(u_\ell - u_r)(H^2+K^2),
\end{align*}
proving \eqref{eq:boundD0} with $c=1/12$.
\end{proof}

\section{The stability estimate for shocks}\label{s:proof}

We follow \cite{leger,leger-vasseur,serre-vasseur}, where $u$ is an entropy solution, and explain how their methods adapt to our more general situation. Our goal is to control the increase of 
\begin{align}\label{eq:F}
F(t)= \int_{-R+St}^{R-St}\eta\big(u(t,x)\big|u_0^{shock}(x-x(t))\big)\, dx,
\end{align}
for a well-chosen Lipschitz path $x(t)$ and $R\geq St$.

First we recall properties of the traces of $u$ along Lipschitz curves, which require only the strong trace property (and are thus valid for finite-entropy solutions). 

\begin{lemma}\label{l:RH}
Let $u$ be a weak bounded solution of \eqref{eq:scl} satisfying the strong trace property \eqref{eq:ST}. Let $x\colon [0,T]\to\R$ be a Lipschitz path, and $u(t,x(t)\pm)$ the traces of $u$ along $x(t)$. Then for almost every $t\in [0,T]$ we have the Rankine-Hugoniot relation
\begin{align}\label{eq:RH}
f(u(t,x(t)+))-f(u(t,x(t)-)=x'(t)(u(t,x(t)+)-u(t,x(t)-)).
\end{align}
\end{lemma}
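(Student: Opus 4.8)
The plan is to test the weak formulation of the conservation law against functions supported in a thin tube around the curve $x(t)$, and to exploit the strong trace property to pass to the limit. Concretely, fix a test function $\psi\in C^\infty_c((0,T))$ and, for small $y>0$, consider the function $\varphi_y(t,x)$ which equals $\psi(t)$ on the strip $\{x(t)-y< x< x(t)+y\}$, equals $0$ outside $\{x(t)-2y<x<x(t)+2y\}$, and interpolates linearly in $x$ in between (on the left tube $\partial_x\varphi_y = \psi(t)/y$, on the right tube $\partial_x\varphi_y=-\psi(t)/y$). Since $u$ is a bounded weak solution of \eqref{eq:scl}, we have $\iint [u\,\partial_t\varphi_y + f(u)\,\partial_x\varphi_y]\,dx\,dt = 0$ for every such $\varphi_y$ (note $\varphi_y$ is Lipschitz, which suffices by density). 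The strategy is to identify the limit of each term as $y\to 0^+$.

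First I would treat the flux term. Because $\partial_x\varphi_y = \psi(t)/y$ on the left tube $L_y=\{x(t)-2y<x<x(t)-y\}$ wait — more carefully, $\partial_x\varphi_y=\psi(t)/y$ on $\{x(t)-2y<x<x(t)-y\}$ and $=-\psi(t)/y$ on $\{x(t)+y<x<x(t)+2y\}$, and $=0$ elsewhere. Thus
\begin{align*}
\iint f(u)\,\partial_x\varphi_y\,dx\,dt
= \int_0^T \frac{\psi(t)}{y}\!\!\int_{x(t)-2y}^{x(t)-y}\!\!\! f(u(t,x))\,dx\,dt
- \int_0^T \frac{\psi(t)}{y}\!\!\int_{x(t)+y}^{x(t)+2y}\!\!\! f(u(t,x))\,dx\,dt.
\end{align*}
Writing $x=x(t)\mp z$ with $z\in(y,2y)$ and applying Fubini, the first double integral becomes $\frac1y\int_y^{2y}\!\int_0^T \psi(t) f(u(t,x(t)-z))\,dt\,dz$. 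By the strong trace property \eqref{eq:ST}, $\int_0^T |u(t,x(t)-z)-u(t,x(t)-)|\,dt\to 0$ as $z\to0^+$ (along an essential limit; since $f$ is locally Lipschitz on the bounded range of $u$, the same holds with $f(u)$ in place of $u$), so the averaged quantity converges to $\int_0^T \psi(t)f(u(t,x(t)-))\,dt$. Similarly the right piece converges to $\int_0^T \psi(t) f(u(t,x(t)+))\,dt$. Hence the flux term tends to $\int_0^T \psi(t)\,[f(u(t,x(t)-)) - f(u(t,x(t)+))]\,dt$.

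Next the time-derivative term. Here I would use that $\partial_t\varphi_y(t,x) = \psi'(t)$ on the inner strip $\{|x-x(t)|<y\}$, while on the two outer tubes $\partial_t\varphi_y$ involves both $\psi'(t)$ and a contribution $\pm x'(t)\psi(t)/y$ coming from differentiating the linear profile through its dependence on $x(t)$; the $\psi'(t)$-parts contribute a term of size $O(y)$ and vanish in the limit. The surviving $O(1/y)$ parts give, by the same change of variables and Fubini, and again using \eqref{eq:ST} (valid since $x$ is Lipschitz so $x'\in L^\infty$), the limit $\int_0^T \psi(t)\,x'(t)\,[u(t,x(t)-) - u(t,x(t)+)]\,dt$ — the sign working out because on the left tube $u$ is multiplied by $+x'(t)\psi(t)/y$ and on the right by $-x'(t)\psi(t)/y$. (One should double-check the orientation of these signs when writing the final version; a clean way is to note $\frac{d}{dt}\varphi_y(t,x) $ along a vertical line equals $\psi'\cdot(\text{profile}) + \psi\cdot \partial_t(\text{profile})$ and $\partial_t(\text{profile}) = -x'(t)\,\partial_x(\text{profile})$, which ties the two computations together and guarantees consistency.)

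Passing to the limit $y\to0^+$ in the weak formulation $\iint[u\,\partial_t\varphi_y + f(u)\,\partial_x\varphi_y] = 0$ then yields
\begin{align*}
\int_0^T \psi(t)\Big[ x'(t)\big(u(t,x(t)-)-u(t,x(t)+)\big) + f(u(t,x(t)-)) - f(u(t,x(t)+)) \Big]\,dt = 0
\end{align*}
for all $\psi\in C^\infty_c((0,T))$, and since the bracket is a fixed $L^\infty((0,T))$ function (the traces are bounded by $\|u\|_\infty$ and $x'\in L^\infty$), it must vanish for a.e.\ $t\in[0,T]$, which is exactly \eqref{eq:RH}. The main obstacle is the bookkeeping in the time-derivative term: one has to set up $\varphi_y$ so that its $x$-derivative and its $t$-derivative along the moving strip are handled consistently, and one must justify taking the essential limit along sequences $z\to 0^+$ rather than an honest limit — this is why the hypothesis is phrased with $\esslim$ in \eqref{eq:ST}, and the averaging over $z\in(y,2y)$ in the argument above is precisely what converts the essential limit into a genuine one. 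Everything else (local Lipschitz continuity of $f$ on the range of $u$, Fubini, density of Lipschitz test functions) is routine.
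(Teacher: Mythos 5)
Your approach is essentially identical to the paper's (which, via \cite[Lemma~6]{leger-vasseur}, tests the weak form against $\chi(t,\xi)=\psi(t)(\Phi_\e(\xi-x(t))+\Phi_\e(x(t)-\xi)-1)$ — your piecewise-linear $\varphi_y$ is just an explicit choice of that profile), and the limiting arguments via the strong trace property and averaging in $z$ are correct. One caveat: the sign bookkeeping in the time-derivative term as you wrote it is flipped. On the left outer tube the profile has $\partial_x(\text{profile})=+1/y$, hence by your own observation $\partial_t(\text{profile})=-x'(t)\partial_x(\text{profile})=-x'(t)/y$, so $u$ is multiplied by $-x'(t)\psi(t)/y$ there (and by $+x'(t)\psi(t)/y$ on the right tube), which is the opposite of what you state; consequently the time term in the limit is $\int_0^T\psi(t)\,x'(t)\bigl(u(t,x(t)+)-u(t,x(t)-)\bigr)\,dt$, and combining with the flux term gives exactly \eqref{eq:RH}. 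You explicitly flagged this as something to double-check and supplied the correct chain-rule identity that resolves it, so this is a slip in the write-up rather than in the method.
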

\begin{proof}[Proof of Lemma~\ref{l:RH}]
This is proved in \cite[Lemma~6]{leger-vasseur}. We sketch the proof for the reader's convenience.
The Rankine Hugoniot relation \eqref{eq:RH} follows from testing the equation
\begin{align*}
\partial_t u +\partial_x f(u) =0,
\end{align*}
against a test function $\chi$ of the form
\begin{align*}
&\chi(t,\xi)=\psi(t)\left(\Phi_\e(\xi-x(t))+\Phi_\e(x(t)-\xi)-1\right),\\
\text{where }&\mathbf 1_{y<-\e} \leq \Phi_\e(y) \leq \mathbf 1_{y<0},
\end{align*}
The strong trace property \eqref{eq:ST} ensures convergence, as $\e\to 0$, to \eqref{eq:RH} tested against $\psi(t)$.
\end{proof}

Next we establish a formula for the variations of quantities of the form
\begin{align*}
t\mapsto \int_{x(t)}^{y(t)}\eta(u(t,x)|v_0)\, dx,
\end{align*}
for some constant $v_0$.

\begin{lemma}\label{l:var}
Let $u$ be a finite-entropy \eqref{eq:finiteentropy} solution of \eqref{eq:scl}. Let $y,z\colon[0,T]\to\R$ be Lipschitz paths, let $0\leq t_1<t_2\leq T$ and assume that
\begin{align*}
y(\tau) < z(\tau)\qquad\forall \tau\in (t_1,t_2).
\end{align*}
For any $v_0\in\R$, 
we have
\begin{align}\label{eq:var}
&\int_{y(t_2)}^{z(t_2)}\eta(u(t_2,\xi)|v_0)\, d\xi -\int_{y(t_1)}^{z(t_1)}\eta(u(t_1),\xi)|v_0)\, d\xi \\
&=\int \mathbf 1_{ t_1 < \tau < t_2,\, y(\tau)<\xi<z(\tau)}\; \mu(d\tau,d\xi)
\nonumber\\
&\quad + \int_{t_1}^{t_2} \left[ q(u(\tau,y(\tau)+);v_0) -y'(\tau)\eta(u(\tau,y(\tau)+)|v_0) \right] \, d\tau \nonumber\\
&\quad - \int_{t_1}^{t_2} \left[ q(u(\tau,z(\tau)-);v_0) -z'(\tau)\eta(u(\tau,z(\tau)-)|v_0) \right] \, d\tau.
\nonumber
\end{align}
\end{lemma}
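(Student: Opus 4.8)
The plan is to prove \eqref{eq:var} by a standard distributional/mollification argument, testing the conservation law and the entropy production identity against a sequence of test functions that approximate the indicator of the space-time region $\{t_1<\tau<t_2,\ y(\tau)<\xi<z(\tau)\}$. The starting point is the identity, valid in the sense of distributions for any constant $v_0$,
\begin{align*}
\partial_\tau \eta(u|v_0) + \partial_\xi q(u;v_0) = \mu,
\end{align*}
which follows by expanding $\eta(u|v_0)=\eta(u)-\eta(v_0)-\eta'(v_0)(u-v_0)$ and $q(u;v_0)=q(u)-q(v_0)-\eta'(v_0)(f(u)-f(v_0))$ and using both $\partial_\tau\eta(u)+\partial_\xi q(u)=\mu$ (definition of $\mu$) and $\partial_\tau u+\partial_\xi f(u)=0$ (the weak equation); the constant terms and the $\eta'(v_0)$-multiple of the equation drop out.

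Next I would build the test function. Take $\chi(\tau,\xi)=\theta(\tau)\,\omega(\tau,\xi)$ where $\theta$ is a smooth approximation of $\mathbf 1_{(t_1,t_2)}$ and $\omega$ is a smooth approximation of $\mathbf 1_{\{y(\tau)<\xi<z(\tau)\}}$, built as in Lemma~\ref{l:RH} from the regularized Heaviside $\Phi_\e$: concretely $\omega(\tau,\xi)=\Phi_\e(\xi-y(\tau))-\Phi_\e(\xi-z(\tau))$ with $\mathbf 1_{s<-\e}\leq\Phi_\e(s)\leq\mathbf 1_{s<0}$, so that $\omega$ has the right support and $\partial_\xi\omega$ concentrates near $\xi=y(\tau)$ and $\xi=z(\tau)$. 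Pairing the displayed identity with $\chi$ and integrating by parts, the $\mu$-term converges to $\int \mathbf 1_{t_1<\tau<t_2,\,y(\tau)<\xi<z(\tau)}\,\mu(d\tau,d\xi)$ (using that $\mu$ is a Radon measure and $\chi\to\mathbf 1$ boundedly, paying attention to the null-measure of the boundary — here the time continuity \eqref{eq:timecont} and the Lipschitz regularity of $y,z$ help, and one may also need to choose $t_1,t_2$ outside an exceptional null set, or argue that the endpoints contribute the boundary integrals $\int_{y(t_i)}^{z(t_i)}\eta(u(t_i,\cdot)|v_0)$ via \eqref{eq:timecont}). The $\partial_\tau\chi$ term produces $-\theta'(\tau)\omega$, which in the limit $\theta\to\mathbf 1_{(t_1,t_2)}$ gives the difference of the two spatial integrals at $t_2$ and $t_1$, plus the contribution of $\partial_\tau\omega=-\Phi_\e'(\xi-y)\,(-y') + \Phi_\e'(\xi-z)\,(-z')$; combined with the $\partial_\xi\chi$ term $\theta\,\partial_\xi\omega=\theta(\Phi_\e'(\xi-y)-\Phi_\e'(\xi-z))$ acting on $q(u;v_0)$, and using the strong trace property \eqref{eq:ST} to pass to the limit in the terms concentrating along $y$ and $z$, one recovers exactly the two boundary integrals $\int_{t_1}^{t_2}[q(u(\tau,y(\tau)+);v_0)-y'(\tau)\eta(u(\tau,y(\tau)+)|v_0)]\,d\tau$ and its analogue at $z$ with the trace from the left and opposite sign.

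The main obstacle is the limit-passing in the terms localized along the Lipschitz curves $y$ and $z$: one must show that, against the approximate Dirac mass $\Phi_\e'(\xi-y(\tau))$ in the $\xi$ variable, the functions $q(u(\tau,\xi);v_0)$ and $\eta(u(\tau,\xi)|v_0)$ converge (in the $L^1_\tau$ sense, after the $\esslim$ over the scale $y\to 0^+$) to their traces $q(u(\tau,y(\tau)+);v_0)$ and $\eta(u(\tau,y(\tau)+)|v_0)$ from the correct side. This is precisely where the strong trace property \eqref{eq:ST} is invoked — exactly as in the sketch of Lemma~\ref{l:RH} — together with the boundedness of $u$ and the local Lipschitz continuity of $q(\cdot;v_0)$ and $\eta(\cdot|v_0)$ on the range of $u$, which turns $L^1$-convergence of $u$ into $L^1$-convergence of the nonlinear quantities. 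A secondary technical point is interchanging the $\e\to 0$ and $\theta\to\mathbf 1$ limits and handling the time endpoints cleanly; this is routine given \eqref{eq:timecont} and the fact that $\mu$ assigns no mass to the horizontal lines $\{\tau=t_i\}$ for a.e. $t_i$, so the argument goes through for a.e. pair $t_1<t_2$ and then for all by the continuity statement. No sign condition on $\mu$ is used here, so the lemma holds verbatim for finite-entropy solutions.
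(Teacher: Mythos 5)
Your proof is correct and matches the paper's approach: both test the relative-entropy identity $\partial_\tau\eta(u|v_0)+\partial_\xi q(u;v_0)=\mu$ against a test function $\psi_\e(\tau)\,\omega_\e(\tau,\xi)$ built from smoothed step functions $\Phi_\e$ localizing the space-time region between the Lipschitz curves $y$ and $z$, then pass to the limit via the strong trace property \eqref{eq:ST} and the time-continuity \eqref{eq:timecont}. (A small slip: your $\omega=\Phi_\e(\xi-y)-\Phi_\e(\xi-z)$ approximates $-\mathbf 1_{\{y<\xi<z\}}$ rather than $\mathbf 1_{\{y<\xi<z\}}$; swapping the two terms fixes the sign, and this does not affect the substance of the argument.)
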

\begin{proof}[Proof of Lemma~\ref{l:var}]
The proof is essentially the same as e.g. \cite[Lemma~6]{leger-vasseur}, see also \cite[Lemma~2.4]{krupa-vasseur}. We sketch it here for the reader's convenience, the only difference being that we keep the terms involving $\mu$. We may assume without loss of generality that $y<z$ in $[t_1,t_2]$ (otherwise consider instead $[t_1+\delta,t_2-\delta]$ and let $\delta\to 0^+$ at the end).

We test the identity
\begin{align*}
\partial_t \eta(u|u_0) +\partial_x q(u|u_0)=\mu ,
\end{align*}
against a test function $\chi$ of the form
\begin{align*}
&\chi(\tau,\xi)=\psi_\e(\tau)\left(\Phi_\e(y(\tau)-\xi) -\Phi_\e(\xi-z(\tau))\right),\\
\text{where }& \mathbf 1_{t_1+\e<\tau<t_2-\e}\leq \psi_\e(\tau) \leq \mathbf 1_{t_1<\tau<t_2},\quad \mathbf 1_{x<-\e} \leq \Phi_\e(x) \leq \mathbf 1_{x<0}.
\end{align*}
and obtain \eqref{eq:var} as $\e\to 0^+$, thanks to the strong trace property \eqref{eq:ST} and the time-continuity property \eqref{eq:timecont}.
\end{proof}

 Using Lemma~\ref{l:var} we will obtain a formula for the variations of $F(t)$ \eqref{eq:F}, and thanks to Lemma~\ref{l:RH} we will see that at any time $t$ where $u(t,\cdot)$ has a jump $(u_-,u_+)$ at $x(t)$, the increase of $F(t)$ is controlled by $\mu$ plus the dissipation rate $D$, which owing to Proposition~\ref{p:Dbound} is in turn controlled by $\mu_+$. Note that so far this is valid for any Lipschitz curve $x(t)$. However, in order to control the increase of $F(t)$ at times $t$ where $u(t,\cdot)$ does not jump at $x(t)$, we need to constrain $x'(t)$. The next lemma gives us a tool to do so. This is the only place where we require the  very strong trace property.

\begin{lemma}\label{l:gencharac}
Let $u$ be a bounded finite-entropy \eqref{eq:finiteentropy} solution of \eqref{eq:scl} and assume that $u$ satisfies the very strong trace property \eqref{eq:VST}. Then for any $x_0\in\R$ there exists a generalized characteristic of $u$ starting at $x_0$, that is, a Lipschitz path
 $x\colon [0,T]\to\R$ such that $x(0)=x_0$ and
\begin{align}\label{eq:gencharac}
x'(t)=\sigma(u(t,x(t)-),u(t,x(t)+)\qquad\text{for a.e. }t\in [0,T]
\end{align}
where $u(t,x(t)\pm)$ denote the traces of $u$ along $x(t)$ and $\sigma$ is the shock speed \eqref{eq:sigma}
\end{lemma}
\begin{proof}[Proof of Lemma~\ref{l:gencharac}]
This is proved e.g. in \cite[Proposition~1]{leger-vasseur}. The path $x$ is obtained as a limit of paths $x_k(t)$ solving $x_k'(t)=\Phi_k(t,x_k(t))$, where $\Phi_k$ is a mollification (with respect to the $x$ variable) of $f'\circ u$. The very strong trace property then ensures that $x$ satisfies
\begin{align*}
\min\lbrace f'(u(t,x(t)\pm)) \rbrace \leq x'(t) \leq \max \lbrace f'(u(t,x(t)\pm)) \rbrace\quad\text{for a.e. }t\in[0,T],
\end{align*}
so that $x'(t)=f'(u(t,x(t)))$ for a.e. $t$ where there is no jump, i.e. $u(t,x(t)-)=u(t,x(t)+=u(t,x(t))$, and at jump points \eqref{eq:gencharac} follows from \eqref{eq:RH}.
\end{proof}
%

We are now ready to prove our main result.

\begin{proof}[Proof of Theorem~\ref{t:stabestim}]
We apply Lemma~\ref{l:gencharac} and let the Lipschitz path $x\colon [0,T]\to\R$ be a generalized characteristic starting at $0$, i.e. $x(0)=0$ and 
\begin{align}\label{eq:x'}
x'(t)=\sigma(u_-(t),u_+(t))\quad\text{for a.e. }t\in [0,T],
\end{align}
where $u_\pm(t)=u(t,x(t)\pm)$ denote the traces of $u$ along $x(t)$.

Let $R>0$ and $F(t)$ defined by \eqref{eq:F} for all $t \leq  R/S$. We assume without loss of generality that $R\geq ST$ (otherwise replace $T$ by $R/S$).
Consider the time
\begin{align*}
t_*= \sup \left\lbrace t\in [0,T]\colon -R+St<x(t)<R-St\right\rbrace.
\end{align*}
By definition of $S=\sup_I |f'|$  we
 know that $\abs{x'}\leq S$ and deduce
\begin{align*}
& -R+St < x(t) < R-St\qquad\forall t\in [0,t_*),\\
\text{and }& x(t)\in (-\infty,-R+St]\cup [R-St,\infty)\qquad\forall t\geq t_*.
\end{align*}
For $t\in [0,t_*]$ we have
\begin{align*}
F(t)=\int_{-R+St}^{x(t)}\eta(u(t,x)|u_\ell)\, dx  + \int_{x(t)}^{R-St}\eta(u(x,t)|u_r)\, dx.
\end{align*}
We apply the variation formula \eqref{eq:var} to compute $F(t)-F(0)$. Note that  the identity
\begin{align*}
\frac{q(u;v)}{\eta(u|v)}=\frac{2}{(u-v)^2}\int_v^u (t-v)f'(t)\, dt
=2\int_0^1 s\, f'(us+v(1-s))\, ds.
\end{align*}
and the definition of $S$ ensure that
\begin{align}\label{eq:qSeta}
\abs{q(u;v)}\leq S \eta(u|v)\qquad\forall u,v\in I=[\min(u_\ell,\inf u),\max(u_r,\sup u)].
\end{align}
As a consequence, whenever $y'(t)=S$ or $z'(t)=-S$ the corresponding term in the right-hand side of \eqref{eq:var} gives a nonpositive contribution, and we deduce
\begin{align*}
F(t)-F(0) &\leq \mu_+(B_{R,t}^S) \\
&\quad +\int_0^{t}\big[q(u_+(\tau);u_r)-q(u_-(\tau);u_\ell)\\
&\hspace{5em} -x'(\tau)\left(\eta(u_+(\tau)|u_r)-\eta(u_-(\tau)|u_\ell\right) \big]\,d\tau,
\end{align*}
where
\begin{align*}
B_{R,t}^S &=\left\lbrace (\tau,\xi)\colon 0<\tau<t,-R+\tau S <\xi <R-\tau S\right\rbrace.
\end{align*}
Recalling \eqref{eq:x'} that $x'=\sigma(u_-,u_+)$ a.e. in $[0,T]$, we recognize the dissipation rate $D$ \eqref{eq:D} and rewrite the above as
\begin{align}\label{eq:boundF1}
F(t)-F(0)&\leq \mu_+(B_{R,t}^S) +\int_0^t D(u_-(\tau),u_+(\tau);u_\ell,u_r)\, d\tau.
\end{align}
Since
\begin{align*}
\sigma(u_-,u_+)-\sigma(u_\ell,u_r)
=\int_0^1 \left[ f'(tu_-+(1-t)u_+) -  f'(tu_\ell +(1-t) u_r)\right] dt.
\end{align*}
and $f'$ is $M$-Lipschitz on $I$ we have
\begin{align*}
\abs{\sigma(u_-,u_+)-\sigma(u_\ell,u_r)}\leq \frac M2 \left( \abs{u_\ell-u_-}+\abs{u_r-u_+}\right),
\end{align*}
so using the upper bound on $D$ provided by Proposition~\ref{p:Dbound} we deduce from \eqref{eq:boundF1} that
\begin{align*}
F(t)-F(0)&\leq \mu_+(B_{R,t}^S) +C_1\frac{M^3}{\alpha^3}\int_0^t \max(E(\tau),0)\, d\tau \\
&\quad -C_2(u_\ell-u_r)\frac{\alpha}{M^2}\int_0^t(x'(\tau)-\sigma)^2\, d\tau,
\end{align*}
where $E(\tau)$ is the entropy cost of the jump $(u_-(\tau),u_+(\tau))$ \eqref{eq:E}.
From the characterization \cite{lecumberry,delellis-otto-westdickenberg} of the one-dimensional part of $\mu$ we have
\begin{align*}
\mu_{+\lfloor\lbrace (\tau,x(\tau))\rbrace} =\max(E(\tau),0)\, d\tau,
\end{align*}
and obtain
\begin{align}\label{eq:Fboundt*}
F(t)&\leq F(0) +  C \frac{M^3}{\alpha^3} \mu_+(B_{R,t}^S) \\
&\quad
-C_2(u_\ell-u_r)\frac{\alpha}{M^2}\int_0^t(x'(\tau)-\sigma)^2\, d\tau, \qquad\forall t\in [0,t_*].\nonumber
\end{align}
Now for $t\in [t_*,T]$ we have
\begin{align*}
F(t)=\int_{-R+St}^{R-St}\eta(u(x,t)|v_0)\, dx,
\end{align*}
where $v_0=u_r$ or $u_\ell$. Therefore, applying \eqref{eq:var} and remarking again that the terms involving $y'(t)=S$ and $z'(t)=-S$ give nonpositive contributions, we obtain
\begin{align*}
F(t)-F(t_*)&\leq \mu_+(B^S_{R,t}\setminus B^S_{R,t_*})\qquad\forall t\in [t_*,T].
\end{align*}
Combining this with the estimate obtained in $[0,t_*]$ and recalling the definition \eqref{eq:F} of $F(t)$ we deduce that
\begin{align*}
\int_{-R+tS}^{R-tS} \abs{u(x,t)-u_0^{shock}(x-x(t))}^2\, dx &\leq \int_{-R}^R\abs{u_0-u_0^{shock}}^2\, dx \\
& \quad +C\frac{M^3}{\alpha^3}\mu_+(B_{R,t}^S)\qquad\forall t\in [0,T].
\end{align*}
Provided we set $h(t)=x(t)-\sigma\, t$ and replace $R$ by $R+St$, this implies our main result \eqref{eq:stabestim} since $B_{R+St,t}^S\subset [0,t]\times [-R-St,R+St]$.

To prove estimate \eqref{eq:estimdrift} on $h(t)$, simply remark that \eqref{eq:Fboundt*} readily implies that
\begin{align*}
c\frac{\alpha}{M^2} (u_\ell-u_r)\int_0^t h'(\tau)^2\, d\tau \leq 
\int_{-R}^{R}(u_0-u_0^{shock})^2\, dx 
+ \frac{M^3}{\alpha^3}\mu_+(B_{R,t}^S)
\end{align*}
for some absolute constant $c>0$, provided $-R+St<x(t)<R-St$. Since we know that $\abs{x(t)}\leq St$ we may choose $R=2St$, and deduce \eqref{eq:estimdrift}.
\end{proof}

\section{The stability estimate for $BV$ initial data}\label{s:stabent}

This section is dedicated to Theorem~\ref{t:stabent}'s proof, following the scheme introduced in \cite{krupa-vasseur}.
It is based on considering initial conditions $\zeta_0$ with a finite number of entropic shocks at
 $x_1^0 <\cdots < x_N^0$, of the form
\begin{align}\label{eq:zeta0finite}
&\zeta_0 (x)=v^0_0(x)\mathbf 1_{x<x^0_1} +v^0_1(x)\mathbf 1_{x^0_1<x<x^0_2} +\cdots +v^0_N(x)\mathbf 1_{x>x^0_N},\\
&\text{where }v^0_0 \geq v^0_1 \geq \cdots \geq v^0_N\text{ are Lipschitz nondecreasing functions on }\mathbb R.\nonumber
\end{align}
Since $\zeta_0$ depends only on the restrictions of $v^0_j$ to pairwise disjoint intervals, we may  assume that their extensions to $\R$, in addition to being Lipschitz and nondecreasing, satisfy
\begin{align*}
&\inf\zeta_0 \leq v^0_j \leq \sup\zeta_0 \quad
\text{and}\quad 0\leq v^0_{j-1}-v^0_j \leq d^0_j\qquad\text{on }\R,
\end{align*}
for all $j\in\lbrace 0,\ldots,N\rbrace$, where $d^0_j:=(v^0_{j-1}-v^0_j)(x_0^j)$ is the schock amplitude of $\zeta_0$ at $x_j^0$. Note that with these notations the negative part of $D\zeta_0$ is given by 
$(D\zeta_0)_- = \sum_j d_j^0\delta_{x_j^0}$.

The function $\tilde u$ appearing in Theorem~\ref{t:stabent} is then going to be piecewise equal to the entropy solutions $v_j$ of \eqref{eq:scl} with initial data $v_j(0,x)=v_j^0(x)$. Since $f$ is convex and the $v_j^0$ are Lipschitz nondecreasing, $v_j$ is directly obtained by the method of characteristics and we have that
\begin{align*}
v_j\text{ is a Lipschitz solution of \eqref{eq:scl} and }\partial_x v_j\geq 0\qquad\forall j\in \lbrace 0,\ldots,N\rbrace.
\end{align*}
Moreover the entropy solutions $v_j$ are ordered as their initial data
\begin{align*}
\sup \zeta_0\geq v_0\geq v_1\geq  \cdots \geq v_N \geq \inf \zeta_0,
\end{align*}
and their differences have the following nonincreasing property: for any time $t>0$ and $R>0$, and intervals $I=[-R,R]$, $I_0=[-R-St,R+St]$ we have
\begin{align}\label{eq:vjdiff}
\sup_{I} (v_{j-1}(t,\cdot)-v_j(t,\cdot)) \leq \sup_{I_0}(v^0_{j-1}-v_j^0)\leq d_j^0.
\end{align}
This last assertion follows from the method of characteristics: for all $x\in I$ there are $y\leq z\in I_0$ such that
\begin{align*}
v_{j-1}(t,x)-v_j(t,x)=v^0_{j-1}(y)-v^0_j(z) \leq v^0_{j-1}(z)-v^0_j(z).
\end{align*}

We construct the function $\tilde u$ by shifting the shocks between the functions $v_j$. 
To deal with shocks between nondecreasing functions (instead of constant functions as in Theorem~\ref{t:stabestim}), we  need an equivalent of Lemma~\ref{l:var} where the constant $v_0$ is replaced by a Lipschitz solution $v$ of \eqref{eq:scl} with $\partial_x v\geq 0$. This corresponds to \cite[Lemma~2.4]{krupa-vasseur} which we transpose here to our setting:

\begin{lemma}\label{l:varnondec}
Let $u$ be a finite-entropy \eqref{eq:finiteentropy} solution of \eqref{eq:scl}. Let $y,z\colon[0,T]\to\R$ be Lipschitz paths, let $0\leq t_1<t_2\leq T$ and assume that
\begin{align*}
y(\tau) < z(\tau)\qquad\forall \tau\in (t_1,t_2).
\end{align*}
For any Lipschitz solution $v$ of \eqref{eq:scl} such that $\partial_x v\geq 0$
we have
\begin{align}\label{eq:varnondec}
&\int_{y(t_2)}^{z(t_2)}\eta(u(t_2,\xi)|v(t_2,\xi))\, d\xi -\int_{y(t_1)}^{z(t_1)}\eta(u(t_1),\xi)|v(t_1,\xi))\, d\xi \\
&\leq\int \mathbf 1_{ t_1 < \tau < t_2,\, y(\tau)<\xi<z(\tau)}\; \mu(d\tau,d\xi)
\nonumber\\
&\quad + \int_{t_1}^{t_2} \left[ q(u(\tau,y(\tau)+);v(\tau,y(\tau))) -y'(\tau)\eta(u(\tau,y(\tau)+)|v(\tau,y(\tau))) \right] \, d\tau \nonumber\\
&\quad - \int_{t_1}^{t_2} \left[ q(u(\tau,z(\tau)-);v(\tau,z(\tau))) -z'(\tau)\eta(u(\tau,z(\tau)-)|v(\tau,z(\tau))) \right] \, d\tau.
\nonumber
\end{align}
\end{lemma}
\begin{proof}[Proof of Lemma~\ref{l:varnondec}]
Since $v$ is Lipschitz we may use the chain rule and find
\begin{align*}
\partial_t\eta(u\vert v ) +\partial_x q(u\vert v) 
&= \mu -\eta''(v)\partial_x v \left[f(u)-f(v)-f'(v)(u-v) \right]\\
&\leq \mu,
\end{align*}
because $f$ is convex and $\eta''(v)\partial_x v\geq 0$. Then \eqref{eq:varnondec} follows as in the proof of Lemma~\ref{l:var}.
\end{proof}

Equipped with Lemma~\ref{l:varnondec}, the construction and estimates of $\tilde u$ become very similar to the proof of Theorem~\ref{t:stabent} as it simply consists in shifting the shocks along generalized characteristics of $u$. One small technical difficulty is that the curves may merge, or cross the bounds of integration. We start by considering the simplest setting where no merging nor crossing happens:

\begin{proposition}\label{p:stabfinite}
Let $t_1<t_2$ and $u\colon [t_1,t_2]\times\R\to\R$ be a bounded finite-entropy solution \eqref{eq:finiteentropy} of \eqref{eq:scl}. Let $x_1,\ldots, x_N\colon [t_1,t_2]\to\R$ be generalized characteristics of $u$, that is,
\begin{align*}
x_j'(t)=\sigma(u(t,x_j(t)-),u(t,x_j(t)+)\qquad\text{for a.e. }t\in [t_1,t_2].
\end{align*}
Assume that $R>0$ is such that for all $\tau\in (t_1,t_2)$,
\begin{align*}
-R-S(t_2-\tau):=x_0(\tau) < x_1(\tau) <\cdots <x_N(\tau)<x_{N+1}(\tau):=R+S(t_2-\tau).
\end{align*}
Then, for any $v_0\geq \cdots \geq v_N$ Lipschitz solutions of \eqref{eq:scl} such that $\partial_x v_j\geq 0$ for $j=0,\ldots,N$, and $\tilde u(t,x)$ defined by
\begin{align*}
\tilde u(t,x)=v_0(t,x)\mathbf 1_{x<x_1(t)} + v_1(t,x) \mathbf 1_{x_1(t)<x<x_2(t)} +\cdots +v_N(t,x)\mathbf 1_{x> x_N(t)},
\end{align*}
we have
\begin{align}\label{eq:estimL2utilde}
&\int_{x_0(t_2)}^{x_{N+1}(t_2)}\abs{u(t_2,x)-\tilde u(t_2,x)}^2 dx
-  \int_{x_0(t_1)}^{x_{N+1}(t_1)}\abs{u(t_1,x)-\tilde u(t_1,x)}^2  dx
\\
&\leq \Lambda(t_1,t_2):= C\frac{M^3}{\alpha^3}\mu_+((t_1,t_2)\times (x_0(t_1),x_{N+1}(t_1))),\nonumber
\end{align}
and, for any entropy solution $u^{ent}$ of \eqref{eq:scl},
\begin{align}\label{eq:estimL1utilde}
&\int_{x_0(t_2)}^{x_{N+1}(t_2)}\abs{\tilde u(t_2,x)-u^{ent}(t_2,x)}\, dx
-  \int_{x_0(t_1)}^{x_{N+1}(t_1)}\abs{\tilde u(t_1,x)-u^{ent}(t_1,x)}\, dx\\
&\leq 
\sum_{j=1}^N \int_{t_1}^{t_2} \sqrt{v_{j-1}(t,x_j(t))-v_j(t,x_j(t))}\, \theta_j(t) \, dt,\nonumber
\end{align}
where the functions $\theta_j$ satisfy
\begin{align}\label{eq:thetaj}
\frac{\alpha}{C}\sum_{j=1}^N \int_{t_1}^{t_2}\theta_j(t)^2\, dt \leq \int_{x_0(t_1)}^{x_{N+1}(t_1)}\abs{u(t_1,x)-\tilde u(t_1,x)}^2  dx + \Lambda(t_1,t_2),
\end{align}
and $C>0$ is an absolute constant.
\end{proposition}

\begin{remark}
In the case of one shock between constant functions, \eqref{eq:estimL2utilde} corresponds to \eqref{eq:stabestim} while \eqref{eq:estimL1utilde} can be inferred from \eqref{eq:estimdrift} and Kru\v{z}kov's $L^1$ stability estimate \cite[Theorem~6.2.3]{dafermos}.
\end{remark}

\begin{remark}\label{r:Jutilde}
The right-hand side of \eqref{eq:estimL1utilde} can be written more compactly as 
\begin{align*}
\int_{J_{\tilde u}}|[\tilde u]|^{\frac 12} \theta \, |\nu_x|\,d\mathcal H^1,
\end{align*}
where $J_{\tilde u}=\bigcup_j \lbrace (t,x_j(t)\rbrace_{t\in [t_1,t_2]}$ is the jump set of $\tilde u$ with normal vector $\nu=(\nu_t,\nu_x)$, 
$[\tilde u]=\tilde u_+-\tilde u_-$ denotes the jump of $\tilde u$ along that jump set, 
and the function $\theta$ is defined on $J_{\tilde u}$ by $\theta(t,x_j(t))=\theta_j(t)$.
 Then \eqref{eq:thetaj} translates into
\begin{align*}
\frac{\alpha}{C}\int_{J_{\tilde u}}\theta^2\, |\nu_x|\,d\mathcal H^1 \leq \int_{x_0(t_1)}^{x_{N+1}(t_1)}\abs{u(t_1,x)-\tilde u(t_1,x)}^2  dx + \Lambda(t_1,t_2).
\end{align*}
\end{remark}

\begin{proof}[Proof of Proposition~\ref{p:stabfinite}]
For all $j\in\lbrace 0,\ldots,N\rbrace$ we  apply Lemma~\ref{l:varnondec} to the paths $y=x_j$, $z=x_{j+1}$ and Lipschitz nondecreasing entropy solution $v_j$:
\begin{align*}
&\int_{x_j(t_2)}^{x_{j+1}(t_2)} \vert u(t_2,x) - \tilde{u}(t_2,x) \vert^2 dx - \int_{x_j(t_1)}^{x_{j+1}(t_1)} \vert u(t_1,x) - \tilde{u}(t_1,x) \vert^2 dx \\
& \leq \mu_+ (\{ t_1 < \tau < t_2, x_j(t) < \xi < x_{j+1} (t) \}) \\
&  \quad + \int_{t_1}^{t_2}\big[ q(u(\tau , x_j(\tau)+) ; v(\tau , x_j(\tau))) \\
&\hspace{5em} - x_j'(\tau) \eta(u(\tau,x_j(\tau)+) \vert v(\tau,x_j(\tau)))\big] \,d\tau  \\
& \quad - \int_{t_1}^{t_2} \big[ q(u(\tau , x_{j+1}(\tau)-) ; v(\tau , x_{j+1}(\tau))) \\
&\hspace{5em}- x_{j+1}'(\tau) \eta(u(\tau,x_{j+1}(\tau)-) \vert v(\tau,x_{j+1}(\tau)))\big]\, d\tau.
\end{align*}
Summing all these inequalities, we deduce
\begin{align*}
&\int_{x_0(t_2)}^{x_{N+1}(t_2)}\abs{u(t_2,x)-\tilde u(t_2,x)}^2 dx
-  \int_{x_0(t_1)}^{x_{N+1}(t_1)}\abs{u(t_1,x)-\tilde u(t_1,x)}^2  dx\\
&\leq \mu_+((t_1,t_2)\times(x_0(t_1),x_{N+1}(t_1)) \\
&\quad
+\sum_{j=1}^N \int_{t_1}^{t_2} D(u(t,x_j(\tau)-),u(\tau,x_j(\tau)+);v_{j-1}(\tau,x_j(\tau)),v_j(\tau,x_j(\tau)))\, d\tau
\end{align*}
Here we discarded the boundary terms involving the paths $x_0$ and $x_{N+1}$ as they give nonpositive contributions thanks to \eqref{eq:qSeta}.
Since $v_0 \geq \cdots \geq v_N$  we may apply Proposition~\ref{p:Dbound} to obtain, as for \eqref{eq:Fboundt*} in the proof of Theorem~\ref{t:stabestim},
\begin{align*}
&\int_{x_0(t_2)}^{x_{N+1}(t_2)}\abs{u(t_2,x)-\tilde u(t_2,x)}^2 dx
-  \int_{x_0(t_1)}^{x_{N+1}(t_1)}\abs{u(t_1,x)-\tilde u(t_1,x)}^2  dx\\
&\leq \Lambda(t_1,t_2) 
-\frac{\alpha}{C}\sum_{j=1}^N \int_{t_1}^{t_2} (v_{j-1}(\tau,x_j(\tau))-v_j(\tau,x_j(\tau))\, s_j(\tau)^2
\, d\tau,
\end{align*}
where
\begin{align}\label{eq:defsj}
s_j(t)=x_j'(t)-\sigma(v_{j-1}(t,x_j(t)),v_j(t,x_j(t))).
\end{align}
This implies in particular \eqref{eq:estimL2utilde}. Note for later use that it also implies
\begin{align}\label{eq:estimsj}
&\frac{\alpha}{CM^2}\sum_{j=1}^N \int_{t_1}^{t_2} (v_{j-1}(\tau,x_j(\tau))-v_j(\tau,x_j(\tau)))\, s_j(\tau)^2
\, d\tau \\
&\leq \int_{x_0(t_1)}^{x_{N+1}(t_1)}\abs{u(t_1,x)-\tilde u(t_1,x)}^2  dx +\Lambda(t_1,t_2).
\nonumber
\end{align}
Now we turn to the proof of \eqref{eq:estimL1utilde}. For any convex entropy $\tilde\eta$ and associated flux $\tilde q$, we compute
\begin{align*}
\tilde\mu_{\tilde\eta}:=\partial_t\tilde\eta(\tilde u) +\partial_x\tilde q(\tilde u)
\end{align*}
Note that $\tilde u$ need not be a solution of \eqref{eq:scl}, but we can still compute this entropy production. Since $\tilde u$ is a Lipschitz solution of \eqref{eq:scl} outside of the Lipschitz curves $x_j$, the $BV$ chain rule ensures that $\tilde\mu_{\tilde\eta}$ is concentrated on those jump curves, and
\begin{align*}
\tilde\mu_{\tilde\eta}&=\sum_{j=1}^N (t,x_j(t))_{\sharp}\left\lbrace g_j(t)\, dt\right\rbrace,\\
g_j(t)&=-x_j'(t)\left[
\tilde\eta(v_j(t,x_j(t)))-\tilde\eta(v_{j-1}(t,x_j(t)))\right]
+\tilde q(v_j(t,x_j(t)))-\tilde q(t,v_{j-1}(t))).
\end{align*}
Recalling from the definition of $s_j$ \eqref{eq:defsj} that $x_j'(t)=\sigma(v_{j-1}(t,x_j(t)),v_j(t,x_j(t))) +s_j(t)$, we see that
\begin{align*}
g_j(t)&=E_{\tilde\eta}(v_{j-1}(t,x_j(t)),v_j(t,x_j(t))) \\
&\quad +s_j(t)\left[
\tilde\eta(v_j(t,x_j(t)))-\tilde\eta(v_{j-1}(t,x_j(t)))\right],
\end{align*}
where $E_{\tilde\eta}(u_-,u_+)$ denotes, as in \eqref{eq:E}, the entropy cost of a jump $(u_-,u_+)$ for the entropy $\tilde\eta$. Here, since $v_{j-1}\geq v_j$ we have $E_{\tilde\eta}(v_{j-1},v_j)\leq 0$, and therefore
\begin{align}\label{eq:estimtildemu}
\tilde\mu_{\tilde\eta}&\leq \left(\sup |\tilde\eta'\circ\tilde u| \right)\, \lambda,\\
\lambda&=\sum_{j=1}^N (t,x_j(t))_{\sharp}\left\lbrace \lambda_j(t)\, dt\right\rbrace,
\nonumber
\\
\lambda_j(t)&= \left(v_{j-1}(t,x_j(t))-v_j(t,x_j(t))\right) \abs{s_j(t)} 
\nonumber
\end{align}
Even though $\tilde u$ is not a solution, and its entropy production is not nonpositive, we claim that the proof of Kru\v{z}kov's $L^1$ estimate \cite[Theorem~6.2.3]{dafermos} can be adapted to obtain
\begin{align}\label{eq:kruzkovtilde}
\partial_t\, |u^{ent}-\tilde u| + \partial_x\, Q(u^{ent};\tilde u) \leq \lambda,
\end{align}
where $Q(u;v)=\mathrm{sign}(u-v)(f(u)-f(v))$. We follow \cite[Theorem~6.2.3]{dafermos} and use the variable doubling technique: from \eqref{eq:estimtildemu} and the fact that $u^{ent}$ is an entropy solution we have
\begin{align}\label{eq:doublevar}
(\partial_t +\partial_s)|u^{ent}(s,y)-\tilde u(t,x)|
+(\partial_x+\partial_y)Q(u^{ent}(s,y);\tilde u(t,x))
\leq \lambda(t,x),
\end{align}
and we test this against
\begin{align*}
\Phi_\e(t,x,s,y)=\psi\left(\frac{t+s}{2},\frac{x+y}{2} \right)\rho_\e\left(\frac{t-s}{2}\right)\rho_\e\left(\frac{x-y}{2}\right),
\end{align*}
for any nonnegative test function $\psi(t,x)$, $\rho_\e(t)=\e^{-1}\rho(\e^{1}t)$ with $\rho$ a smooth nongative function with compact support and unit integral, and small enough $\e>0$. In the proof of \cite[Theorem~6.2.3]{dafermos}, it is shown that the left-hand side of \eqref{eq:doublevar} tested against $\Phi_\e$ converges to the left-hand side of \eqref{eq:kruzkovtilde} tested against $\psi$, as $\e\to 0$. To obtain \eqref{eq:kruzkovtilde} we only need to check that the same happens with the right-hand sides. We have
\begin{align*}
&\langle \lambda(t,x),\Phi_\e(t,x,s,y)\rangle \\
&=\int \psi\left(\frac{t+s}{2},\frac{x+y}{2} \right)
\rho_\e\left(\frac{t-s}{2}\right)\rho_\e\left(\frac{x-y}{2}\right)
\,\lambda(dt,dx)\, dsdy\\
&=\int \psi\left(t-\e \hat s,x-\e\hat y \right)\,\lambda(dt,dx)\,
\rho(\hat s)\rho(\hat y) \,d\hat s d\hat y
\xrightarrow[\e\to 0]{}  \langle\lambda,\psi\rangle,
\end{align*}
by dominated convergence, since $\psi$ is bounded and $\lambda$ is a finite measure. This proves \eqref{eq:kruzkovtilde}. (Direct computations using the $BV$ chain rule would also provide a proof.) 

 Testing \eqref{eq:kruzkovtilde} as in the proof of Lemma~\ref{l:var} and discarding nonpositive boundary terms, we deduce that
\begin{align*}
&\int_{x_0(t_2)}^{x_{N+1}(t_2)}\abs{\tilde u(t_2,x)-u^{ent}(t_2,x)}\, dx
-  \int_{x_0(t_1)}^{x_{N+1}(t_2)}\abs{\tilde u(t_1,x)-u^{ent}(t_1,x)}\, dx\\
&\leq \int \mathbf 1_{t_1<t< t_2,x_0(t)<x<x_{N+1}(t)}\lambda(dt,dx)\\
&=\sum_{j=1}^N \int_{t_1}^{t_2} \sqrt{v_{j-1}(t,x_j(t))-v_j(t,x_j(t))}\,\theta_j(t)\, dt,
\end{align*}
where $\theta_j(t)=\sqrt{v_{j-1}(t,x_j(t))-v_j(t,x_j(t))}\, \abs{s_j(t)}$ satisfies \eqref{eq:thetaj} thanks to \eqref{eq:estimsj}.
\end{proof}

Now we turn to the proof of Theorem~\ref{t:stabent} for initial conditions  $\zeta_0$ with a finite number of shocks as in \eqref{eq:zeta0finite}.

\begin{proof}[Proof of Theorem~\ref{t:stabent} for $\zeta_0$ as in \eqref{eq:zeta0finite}]
We let $x_1,\ldots ,x_N\colon [0,T]\to\mathbb R$ be the generalized characteristics of $u$ starting at $x^0_1,\ldots, x^0_N$.
We let 
\begin{align*}
t_*=\sup\left\lbrace t\in [0,T]\colon x_1(\tau) < \cdots < x_N(\tau) \;\forall \tau\in [0,t] \right\rbrace >0,
\end{align*}
so that the curves $x_j$ do not intersect on $[0,t_*)$. If $t_*<T$, some of the curves intersect at $t_*$, and for all $j\in \lbrace 1,\ldots, N\rbrace$ such that 
\begin{align*}
x_{j-1}(t_*) < x_j(t_*) =x_{j+1}(t_*) = \ldots =x_{\ell}(t_*) < x_{\ell+1}(t_*),
\end{align*}
for some $\ell\in\lbrace j+1,\ldots, N\rbrace$
(with the convention that $x_0=-\infty$ and $x_{N+1}=+\infty$),
we modify $x_{j+1},\ldots ,x_\ell$ on $[t_*,T]$ by setting them all equal to $x_j$. In particular after this modification these curves are still generalized characteristics of $u$. We repeat that procedure, at most $N$ times, until we have generalized characteristics $x_1,\ldots,x_n$ starting at $x_0^1,\ldots, x^0_N$, which may intersect, but not cross:
\begin{align*}
x_1\leq x_2\leq \ldots \leq x_N\quad\text{ on }[0,T].
\end{align*}
Then we define $\tilde u$ on $[0,T]\times\R$ by setting
\begin{align*}
\tilde u(t,x) = v_0(t,x)\mathbf 1_{x<x_1(t)} + v_1(t,x) \mathbf 1_{x_1(t)<x<x_2(t)} +\cdots +v_N(t,x)\mathbf 1_{x> x_N(t)},
\end{align*}
where $v_j$ are the entropy solutions of \eqref{eq:scl} with initial data $v^0_j$. Note that some terms of this sum may become zero as $t$ increases and curves merge.

For any $t\in [0,T]$ and $R>0$ we set
\begin{align*}
&x_0(\tau)=-R-S(t-\tau),\quad x_{N+1}(\tau)=R+S(t-\tau),\\
& A_{R,t}=\left\lbrace (\tau,\xi)\colon 0<\tau < t,\; x_0(\tau)< \xi < x_{N+1}(\tau)\right\rbrace.
\end{align*}
As $\tau$ increases from $0$ to $t$, some curves $x_1(\tau),\cdots,x_N(\tau)$ may merge, or exit the interval $[x_0(\tau),x_{N+1}(\tau)]$ (but they can not enter it, as $|x_j'|\leq S$). We partition $[0,T]$ into at most $N$ intervals inside which no merging nor crossing happens. In those subintervals we can apply Proposition~\ref{p:stabfinite}. Concatenating all resulting estimates \eqref{eq:estimL2utilde}, we deduce
\begin{align*}
&\int_{-R}^{R}\abs{u(t_2,x)-\tilde u(t_2,x)}^2 dx
-  \int_{-R-St}^{R+St}\abs{u(0,x)-\tilde u(0,x)}^2  dx
\\
&\leq \Lambda(0,t)= C\frac{M^3}{\alpha^3}\mu_+((0,t)\times (-R-St,R+St)),\nonumber
\end{align*}
which proves \eqref{eq:stabentL2}. Concatenating the estimates \eqref{eq:estimL1utilde} we obtain a function $\theta$ defined on the jump set $J_{\tilde u}$ of $\tilde u$, such that for any entropy solution $u^{ent}$,
\begin{align}\label{eq:stabentL1theta}
&\int_{-R}^{R}\abs{\tilde u(t,x)-u^{ent}(t,x)}\, dx
-  \int_{-R-St}^{R+St}\abs{\tilde u(0,x)-u^{ent}(0,x)}\, dx \\
&
\leq 
\int_{J_{\tilde u}\cap A_{R,t}} |[\tilde u]|^{\frac 12} \theta \, |\nu_x|\,d\mathcal H^1,\nonumber\\
\text{and }
&\frac{\alpha}{CM^2}\int_{J_{\tilde u}\cap A_{R,t}}\theta^2\, |\nu_x|\,d\mathcal H^1 \leq \int_{-R-St}^{R+St}\abs{u(0,x)-\tilde u(0,x)}^2  dx + \Lambda(0,t).\nonumber
\end{align}
Here we use the notations of Remark~\ref{r:Jutilde}, $[\tilde u]$ denotes the jump of $\tilde u$ along the jump set $J_{\tilde u}$ with normal vector $\nu=(\nu_t,\nu_x)$.

Then we estimate the right-hand side of \eqref{eq:stabentL1theta},
\begin{align*}
&\int_{J_{\tilde u}\cap A_{R,t}} |[\tilde u]|^{\frac 12} \theta \, |\nu_x|\,d\mathcal H^1 \\
&
\leq \left(\int_{J_{\tilde u}\cap A_{R,t}}|\tilde u|\,|\nu_x|\,d\mathcal H^1\right)^{\frac 12}
\left(\int_{J_{\tilde u}\cap A_{R,t}}\theta^2\, |\nu_x|\,d\mathcal H^1\right)^{\frac 12}\\
&\leq C \frac{M}{\alpha^{\frac 12}}\left(\int_{J_{\tilde u}\cap A_{R,t}}|\tilde u|\,|\nu_x|\,d\mathcal H^1\right)^{\frac 12}
\sqrt{ \int_{-R-St}^{R+St}\abs{u(0,x)-\tilde u(0,x)}^2  dx + \Lambda(0,t)}.
\end{align*}
Using the nonincreasing property \eqref{eq:vjdiff} of the differences between the functions $v_j$ we can estimate $[\tilde u]$, and obtain, with $X_0=\lbrace j\colon x^0_j\in [-R-St,R+St]\rbrace$,
\begin{align*}
\int_{J_{\tilde u}\cap A_{R,t}}|[\tilde u]|\,|\nu_x|\,d\mathcal H^1 &
\leq \int_0^t \left[\sum_{j\in X_0} \sup_{[x_0(\tau),x_{N+1}(\tau)]} (v_{j-1}(\tau,\cdot)-v_j(\tau,\cdot))\right] \, d\tau
\\
&\leq t \sum_{j\in X_0}d^0_j 
= t\cdot (D\zeta_0)_-([-R-St,R+St]),
\end{align*}
hence
\begin{align*}
\int_{J_{\tilde u}\cap A_{R,t}} |[\tilde u]|^{\frac 12} \theta \, |\nu_x|\,d\mathcal H^1 
&\leq C \alpha^{-\frac 12}\sqrt{(D\zeta_0)_-([-R-St,R+St]) }\sqrt{t}\\
&\quad\quad\cdot
\sqrt{ \int_{-R-St}^{R+St}\abs{u(0,x)-\tilde u(0,x)}^2  dx + \Lambda(0,t)}.
\end{align*}
Combining this with \eqref{eq:stabentL1theta} and choosing $u^{ent}=\zeta$ readily implies \eqref{eq:stabentL1}, and concludes the proof of Theorem~\ref{t:stabent} when the initial condition $\zeta_0$ has a finite number of shocks as in \eqref{eq:zeta0finite}.
\end{proof}

Before turning to the proof of Theorem~\ref{t:stabent} for any initial condition $\zeta_0\in L^\infty\cap BV_{loc}(\R)$, we gather some estimates on the function $\tilde u$ that we just constructed:

\begin{lemma}\label{l:estimutilde}
When $\zeta_0$ has a finite number of shocks as in \eqref{eq:zeta0finite}, for any $0\leq s<t\leq T$ and $R>0$ we have the bounds
\begin{align}
|D_x\tilde u|([s,t]\times [-R,R])&\leq 2(t-s)\left(\norm{\zeta_0}_\infty +(D\zeta_0)_-([-R-St,R+St])\right),\label{eq:dxutilde}\\
|D_t \tilde u|([s,t]\times [-R,R])&\leq  S |D_x\tilde u|([s,t]\times [-R,R]),\label{eq:dtutilde}\\
\int_{-R}^R |\tilde u(t,x)-\tilde u(s,x)|\, dx &\leq 2S \big(\norm{\zeta_0}_\infty+(D\zeta_0)_-([-R-St,R+St])\big)(t-s).\label{eq:utildeLipL1}
\end{align}
\end{lemma}
\begin{proof}[Proof of Lemma~\ref{l:estimutilde}]
To obtain \eqref{eq:dxutilde}, remark that $(D_x\tilde u)_-$ consists only of shocks which are differences between the $v_j$, therefore using the nonincreasing properties \eqref{eq:vjdiff} of such differences and letting  $X_0=\lbrace j\colon x^0_j\in [-R-St,R+St]\rbrace$ we have
\begin{align*}
(D_x\tilde u)_-([s,t]\times [-R,R])&= \int_{J_{\tilde u}\cap ([s,t]\times [-R,R])}|[\tilde u]| \,|\nu_x|\,d\mathcal H^1 \\
&\leq (t-s) \sum_{j\in X_0}d^0_j = (t-s)(D\zeta_0)_-([-R-ST,R+ST]).
\end{align*}
This implies \eqref{eq:dxutilde} since $|D_x\tilde u|=D_x\tilde u + 2 (D_x\tilde u)_-$ and
\begin{align*}
D_x\tilde u ([s,t]\times [-R,R]) \leq 2 (t-s)\norm{\tilde u}_\infty \leq 2(t-s)\norm{\zeta_0}_\infty.
\end{align*}
To obtain \eqref{eq:dtutilde} we simply note that outside $J_{\tilde u}$ the function $\tilde u$ is Lipschitz and satisfies $\partial_t\tilde u=-f'(\tilde u)\partial_x\tilde u$, and for the jump part we take into account that the normal vector satisfies $|\nu_t|\leq S|\nu_x|$. Finally, using that 
\begin{align*}
\int_{-R}^R |\tilde u(t,x)-\tilde u(s,x)|\, dx \leq |D_t\tilde u|([s,t]\times [-R,R]),
\end{align*}
we obtain \eqref{eq:utildeLipL1} as a consequence of \eqref{eq:dxutilde}-\eqref{eq:dtutilde}.
\end{proof}

\begin{remark}\label{r:oleinikutilde}
We also have the Oleinik-type bound $D_x \tilde u(t,\cdot)\leq 1/(\alpha t)$ for all $t\in (0,T]$, since the functions
 $v_j$ satisfy $\partial_x v_j\leq 1/(\alpha t)$ and all other contributions to $D_x\tilde u$ are negative shocks.
\end{remark}

Finally we prove Theorem~\ref{t:stabent} for any initial condition $\zeta_0\in L^\infty\cap BV_{loc}(\R)$.

\begin{proof}[Proof of Theorem~\ref{t:stabent}]
We fix $\zeta_0\in L^\infty\cap BV_{loc}(\R)$ and approximate it with functions $\zeta_\e$ of the form \eqref{eq:zeta0finite} as follows. We have $\zeta_0=\zeta^1 +\zeta^2$ where $\zeta^1$ is nondecreasing and $\zeta^2$ is nonincreasing, and  $\Vert\zeta^j\Vert_\infty\leq \norm{\zeta_0}_\infty$ for $j=1,2$. We fix a smooth compactly supported nonnegative function $\rho$ with unit integral on $\R$ and define $\rho_\e(x)=\e^{-1}\rho(\e^{-1}x)$. We set
\begin{align*}
\zeta^1_\e =\zeta^1 *\rho_\e,
\end{align*}
so that $\zeta^1_{\e}$ is Lipschitz nondecreasing and $\Vert\zeta^1_\e\Vert_\infty\leq \norm{\zeta_0}_\infty$. Taking $\e=1/K$ for some integer $K>0$, we let
\begin{align*}
a=\inf\zeta^2,\quad b=\sup\zeta^2,\quad a_\ell =a+\ell \frac{b-a}{K}\text{ for }\ell\in\lbrace 0,\ldots, K\rbrace
\end{align*}
and for an integer $K>0$ we define
\begin{align*}
\zeta^2_\e(x) =a_0 \mathbf 1_{\zeta^2(x)=a_0} + \sum_{\ell=1}^K a_\ell \mathbf 1_{a_{\ell-1}<\zeta^2(x)\leq a_\ell}.
\end{align*}
Since $\zeta^2$ is nonincreasing, the sets in the above indicator functions are intervals, and we see that 
\begin{align*}
\zeta^0_\e =\zeta^1_\e +\zeta^2_\e,
\end{align*}
is equal almost everywhere to a function of the form \eqref{eq:zeta0finite}. Moreover we have 
\begin{align*}
\zeta^0_\e &\longrightarrow \zeta_0\quad\text{in }L^2_{loc}(\mathbb R)\\
(D\zeta^0_\e)_-([-R,R])&\longrightarrow (D\zeta_0)_-([-R,R])\quad\text{for all }R>0,
\end{align*}
as $\e\to 0$. Applying Theorem~\ref{t:stabent} to the initial condition $\zeta^0_\e$ we obtain functions $\tilde u_\e$ satisfying \eqref{eq:stabentL2}-\eqref{eq:stabentL1}. Thanks to Lemma~\ref{l:estimutilde} and $\Vert\tilde u_\e\Vert_\infty\leq \Vert\zeta_0\Vert_\infty$ we may extract a subsequence of $\tilde u_\e$ such that $\tilde u_\e(t,\cdot)$ converges in $L^2_{loc}$  for every $t\in [0,T]$, and pass to the limit in \eqref{eq:stabentL2}-\eqref{eq:stabentL1}. The limit $\tilde u$ satisfies the bounds of Lemma~\ref{l:estimutilde}.
\end{proof}

\bibliographystyle{acm}
\bibliography{ref}
\end{document}